\newcommand{\Seg}{\operatorname{Seg}}
\newcommand{\defi}[1]{{\upshape\bf \sffamily #1}}
\renewcommand{\a}{\alpha}
\newcommand{\bw}{\bigwedge}
\renewcommand{\det}{\textrm{det}}
\renewcommand{\ll}{\lambda}
\newcommand{\oo}{\otimes}
\newcommand{\R}{\overline{R}}
\renewcommand{\S}{\overline{S}}
\newcommand{\lexrk}{\operatorname{lexrk}}
\newcommand{\rk}{\operatorname{rk}}
\newcommand{\GL}{\operatorname{GL}}
\newcommand{\Sym}{\operatorname{Sym}}
\newcommand{\Tor}{\operatorname{Tor}}
\newcommand{\bb}[1]{\mathbb{#1}}
\newcommand{\op}[1]{\operatorname{#1}}
\renewcommand{\rm}[1]{\textrm{#1}}
\newcommand{\mc}[1]{\mathcal{#1}}
\newcommand{\ol}[1]{\overline{#1}}
\newcommand{\ul}[1]{\underline{#1}}
\def\lra{\longrightarrow}
\newtheorem{theorem}{Theorem}[section]
\newtheorem*{theorem*}{Theorem}
\newtheorem{lemma}[theorem]{Lemma}
\newtheorem{corollary}[theorem]{Corollary}
\newtheorem*{corollary*}{Corollary}
\newtheorem*{main*}{Main Theorem}
\theoremstyle{definition}
\newtheorem{defn}[equation]{Definition}
\newtheorem{eg}[theorem]{Example}
\newenvironment{example}[1][]{\begin{eg}[#1] \pushQED{\qed}}{\popQED \end{eg}}
\theoremstyle{remark}
\newtheorem{rmk}[theorem]{Remark}
\newenvironment{remark}[1][]{\begin{rmk}[#1] \pushQED{\qed}}{\popQED \end{rmk}}
\numberwithin{equation}{section}
\def\blue#1{{\textcolor{blue}{#1}}}
\def\green#1{{\textcolor{green}{#1}}}
\def\yellow#1{{\textcolor{yellow}{#1}}}
\newcommand{\luke}[1]{{\color{green} \sf $\clubsuit\clubsuit\clubsuit$ Luke: [#1]}}
\begin{document}

\title{On the (non-)vanishing of syzygies of Segre embeddings}

\author{Luke Oeding}
\address{Department of Mathematics, Auburn University, 221 Parker Hall, Auburn, AL 36849}
\email{oeding@auburn.edu}

\author{Claudiu Raicu}
\address{Department of Mathematics, University of Notre Dame, 255 Hurley, Notre Dame, IN 46556\newline
\indent Institute of Mathematics ``Simion Stoilow'' of the Romanian Academy}
\email{craicu@nd.edu}

\author{Steven V Sam}
\address{Department of Mathematics, University of Wisconsin, Madison}
\email{svs@math.wisc.edu}

\subjclass[2010]{Primary 13D02}

\date{\today}

\keywords{Syzygies, Segre embeddings}

\begin{abstract}
We analyze the vanishing and non-vanishing behavior of the graded Betti numbers for Segre embeddings of products of projective spaces. We give lower bounds for when each of the rows of the Betti table becomes non-zero, and prove that our bounds are tight for Segre embeddings of products of $\bb{P}^1$. This generalizes results of Rubei concerning the Green--Lazarsfeld property $N_p$ for Segre embeddings. Our methods combine the Kempf--Weyman geometric technique for computing syzygies, the Ein--Erman--Lazarsfeld approach to proving non-vanishing of Betti numbers, and the theory of algebras with straightening laws.
\end{abstract}

\maketitle

\section{Introduction}\label{sec:intro}

For non-negative integers $a_1,\cdots,a_n$, let $X=\rm{Seg}(a_1,\cdots,a_n)$ denote the image of the $n$-Segre embedding
\[\bb{P}^{a_1}\times\cdots\times\bb{P}^{a_n}\lra\bb{P}^A,\]
where $A=(a_1+1)\cdots(a_n+1)-1$ and the ground field is the complex numbers $\bb{C}$. Let $S$ denote the homogeneous coordinate ring of $\bb{P}^A$, let $I\subset S$ denote the ideal of equations vanishing on $X$, and let $R=S/I$ denote the homogeneous coordinate ring of $X$. We study the (non-)vanishing behavior of the syzygies of $X$, defined by:
\[K_{p,q}(a_1,\cdots,a_n)=\Tor_p^S(R,\bb{C})_{p+q}.\]

\begin{main*} Assume that $a_1\geq\cdots\geq a_n\geq 0$. We have the non-vanishing for $K_{p,1}$ groups
\begin{equation}\label{eq:Kp1-not-0}
 K_{p,1}(a_1,\cdots,a_n) \neq 0 \mbox{ for } 1\leq p \leq (a_1+1)\cdots(a_{n-1}+1) + a_n - 2.
\end{equation}
For arbitrary $q$ we have the vanishing of $K_{p,q}$ groups
 \begin{equation}\label{eq:Kpq=0}
K_{p,q}(a_1,\cdots,a_n)=0\rm{ for }0\leq p<P(a_1,\cdots,a_n;q)-q,
\end{equation}
where $P(a_1,\cdots,a_n;q)$ is defined recursively by the following rules:
\begin{enumerate}
 \item $P(a;0)=0$ and $P(a;q)=\infty$ for $q>0$.
 \item For $n>1$,
\[ P(a_1,\cdots,a_n;q)=\min_{0\leq j\leq\min(q,a_n)}(P(a_1,\cdots,a_{n-1};q-j)+j)\cdot(j+1).\]
\end{enumerate}
If $a_1=\cdots=a_n=1$ then the (non-)vanishing behavior of $K_{p,q}$ is completely characterized by
 \begin{equation}\label{eq:Kpq-allP1}
   K_{p,q}(1,\cdots,1)\neq 0\rm{ if and only if }2^{q+1}-2-q\leq p\leq 2^n-2^{n-q}-q,\rm{ and }q=0,\cdots,n-1.
 \end{equation}
\end{main*}

Note that $K_{p,q}(a_1,\cdots,a_n)=K_{p,q}(a_1,\cdots,a_n,0)$ so there is no harm in assuming that all $a_i>0$. We chose to allow some $a_i=0$ for more flexibility. To illustrate our theorem with an example, consider the Segre embedding of $\bb{P}^2 \times \bb{P}^2 \times \bb{P}^1$ into~$\bb{P}^{17}$, which occurs by taking $n=3$ and $(a_1,a_2,a_3) = (2,2,1)$. The graded Betti table (as computed by Macaulay2 \cite{M2}) records in column $p$ and row $q$ the vector space dimension of $K_{p,q}(2,2,1)$:

\bigskip

\begin{verbbox}
       0  1   2    3    4    5    6    7    8    9  10 11 12
total: 1 63 394 1179 2087 2692 3726 4383 3275 1530 407 45  2
    0: 1  .   .    .    .    .    .    .    .    .   .  .  .
    1: . 63 394 1179 1980 1702  396   63    8    .   .  .  .
    2: .  .   .    .  107  990 3330 4320 3267 1530 407 36  .
    3: .  .   .    .    .    .    .    .    .    .   .  9  2
\end{verbbox}

\begin{center}
\theverbbox
\end{center}

\bigskip

The relevant values of the $P$-function are computed below
\begin{center}
\setlength{\extrarowheight}{2pt}
\tabulinesep=1.2mm
\begin{tabu}{c|ccccc}
$q$ & $0$ & $1$ & $2$ & $3$ & $4$ \\
\hline
$P(2,2,1;q)$ & $0$ & $2$ & $6$ & $14$ & $\infty$\\
\hline
$P(2,2,1;q)-q$ & $0$ & $1$ & $4$ & $11$ & $\infty$ \\
\end{tabu}
\end{center}
showing that the vanishing statement (\ref{eq:Kpq=0}) is sharp in this example. Notice also that $K_{p,1}(2,2,1) \neq 0$ precisely for $1\leq p\leq 8 = (a_1+1)(a_2+1)+a_3-2$, so (\ref{eq:Kp1-not-0}) is also sharp.

The (non-)vanishing properties of the groups $K_{p,2}(a_1,\cdots,a_n)$ have been previously investigated by Rubei in relationship with the Green--Lazarsfeld property $N_p$: she shows in \cites{rubei-P1,rubei-PN} that for any $n\geq 3$ and any $a_1,\cdots,a_n\geq 1$ one has $K_{p,2} = 0$ for $p\leq 3$ and $K_{4,2}\neq 0$. To see how our results recover Rubei's, we first note that the non-vanishing statement for arbitrary products reduces using (\ref{eq:kpq-specialization}) to verifying that $K_{4,2}(1,1,1) \neq 0$, which in turn follows from (\ref{eq:Kpq-allP1}) by taking $q=2$. For the vanishing of the groups $K_{p,2}$ we can apply (\ref{eq:Kpq=0}) once we compute the appropriate values for the $P$-function: we invite the reader to check that 
\begin{itemize}
\item $P(a_1,\cdots,a_n;1) = 2$ if $n\geq 2$ and $a_2\geq 1$, and that $P(a,0,\cdots;1) = \infty$;
\item $P(a_1,\cdots,a_n;2) = 6$ if $n\geq 2$ and $a_2\geq 2$, or if $n\geq 3$ and $a_3\geq 1$, while in all other cases $P(a_1,\cdots,a_n;2) = \infty$.
\end{itemize}
It follows from (\ref{eq:Kpq=0}) that $K_{p,2} = 0$ for $n\geq 3$ and $a_1,\cdots,a_n\geq 1$, recovering the vanishing aspect of Rubei's result.

The vanishing statement (\ref{eq:Kpq=0}) may seem hard to apply due to the recursive nature of the $P$-function. Nevertheless, in the case when $a_i=a$ for all $i$ we get an explicit closed formula as follows. If we let $q,r\geq 0$ such that $(r-1)a<q\leq ra$ and set $q_0=q-(r-1)a$, then we show in Lemma~\ref{lem:Paaa} that
\[
P(a,a,\cdots,a;q)=\begin{cases}
(q_0^2+q_0)\cdot(a+1)^{r-1}+(a+1)^r-(a+1) & \rm{for }n>r, \\
\infty & \rm{otherwise}.
\end{cases}
\]

The paper is organized as follows. In Section~\ref{sec:prelim} we recall some basic notation and results regarding syzygies, representation theory, and the Kempf--Weyman geometric technique. In Section~\ref{sec:vanishing} we prove the vanishing statement (\ref{eq:Kpq=0}) from our Main Theorem. In Section~\ref{sec:non-vanishing-Kp1} we use Artinian reduction as in \cite{EEL} to prove the non-vanishing (\ref{eq:Kp1-not-0}) for $K_{p,1}$ groups. In Section~\ref{sec:straightening} we construct a standard basis for the Artinian reduction of the coordinate ring of the Segre variety, inspired by the theory of algebras with straightening laws, and devise an algorithm for expressing every element in the said reduction as a linear combination of elements of the standard basis. Equipped with a standard basis, we adapt the methods of \cite{EEL} to prove in Section~\ref{sec:non-vanishing-P1} a sharp non-vanishing result for the syzygies of Segre embeddings of products of $\bb{P}^1$. We conclude with a number of examples in Section~\ref{sec:examples}.

\section{Preliminaries}\label{sec:prelim}

\subsection{Syzygies}\label{subsec:syzygies}

Let $S$ denote a standard graded polynomial ring over $\bb{C}$ and let $M$ be a finitely generated graded $S$-module. For $p\geq 0$ and $q\in\bb{Z}$ we let
\begin{equation}\label{eq:Kpq-M}
 K_{p,q}^S(M) = \Tor_p^S(M,\bb{C})_{p+q},
\end{equation}
and refer to the groups $K_{p,q}^S(M)$ as the \defi{syzygy modules of $M$}. We write $K_{p,q}(M)$ instead of $K_{p,q}^S(M)$ when there is no danger of confusion regarding the ring $S$. 

It will be useful to interpret the syzygy modules with \defi{Koszul homology groups} as follows. Write $S = \Sym_{\bb{C}}(V)$ where $V$ is the vector space of linear forms (say $\dim(V)=r$), and consider the Koszul complex resolving the residue field:
\[K_{\bullet}:\qquad 0\lra \bw^r V \oo S(-r) \lra \cdots \lra \bw^p V \oo S(-p) \lra \cdots \lra S \lra 0.\]
Since $\Tor_p^S(M,\bb{C})$ is the $p$-th homology group of $K_{\bullet} \oo_S M$, it follows that we can compute $K_{p,q}(M)$ as the homology of the $3$-term complex
\begin{equation}\label{eq:3term-koszul}
 \bw^{p+1}V \oo M_{q-1} \lra \bw^p V \oo M_q \lra \bw^{p-1}V \oo M_{q+1}.
\end{equation}

Finally, suppose that $\ell_1,\cdots,\ell_k$ is a regular sequence on $M$ consisting of linear forms in $S$ and let
\[
  \ol{S} = S / (\ell_1,\cdots,\ell_k) \qquad\mbox{ and }\qquad \ol{M} = M \oo_S \ol{S}.
\]
Note that $\ol{S}$ can be thought of as a polynomial ring in $k$ fewer variables and $\ol{M}$ is a finitely generated graded $\ol{S}$-module. For each $p\geq 0$, $q\in\bb{Z}$ we have an isomorphism of vector spaces
\begin{equation}\label{eq:reduction-syz}
 K_{p,q}^S(M) \simeq K_{p,q}^{\ol{S}}(\ol{M}).
\end{equation}

\subsection{The geometric technique}\label{subsec:geometric-technique}
Here we introduce essential ingredients from the Kempf--Weyman \emph{geometric technique}, \cite[Chapter 5]{weyman}.
Let $V$ be a finite dimensional complex vector space, let $\bb{P}V$ denote the projective space of rank one quotients of $V$, and consider the tautological sequence
\begin{equation}\label{eq:taut-seq}
 0\lra\mc{R}\lra V\oo\mc{O}_{\bb{P}V}\lra\mc{Q}\lra 0
\end{equation}
where $\mc{Q} \cong \mc{O}(1)$ is the tautological rank one quotient bundle, and $\mc{R}$ is the tautological subbundle.

Suppose that $U$ is another finite dimensional vector space, and let 
\begin{equation}\label{eq:S-vs-mcS}
S = \Sym_{\bb{C}}(U\oo V)\qquad\mbox{ and }\qquad\mc{S} = \Sym_{\mc{O}_{\bb{P}V}}(U\oo\mc{Q}).
\end{equation}
We think of $S$ as a graded ring with $U\oo V$ sitting in degree one, and similarly think of $\mc{S}$ as a sheaf of graded algebras. Note that \eqref{eq:taut-seq} induces a natural degree preserving surjective map $S\oo\mc{O}_{\bb{P}V} \lra \mc{S}$.

Let $\mc{M}$ be a quasi-coherent sheaf of graded $\mc{S}$-modules on $\bb{P}V$ and assume that $\mc{M}$ has no higher cohomology. Set $M = H^0(\bb{P}V,\mc{M})$, which is a graded $S$-module in the natural way.

Suppose further that $\mc{M}$ has a \defi{minimal graded free resolution}
\begin{equation}\label{eq:def-F}
\mc{F}_{\bullet}: 0\lra\mc{F}_r\lra\cdots\lra\mc{F}_0 \lra \mc{M}\lra 0,
\end{equation}
where $\mc{F}_p = \bigoplus_{q\in\bb{Z}} \mc{K}_{p,q}(\mc{M}) \oo_{\mc{O}_{\bb{P}V}} \mc{S}$ and moreover the following conditions are satisfied:
\begin{itemize}
 \item Each $\mc{K}_{p,q}(\mc{M})$ is a coherent sheaf on $\bb{P}V$.
 \item For each $p$ there are only finitely many values of $q$ for which $\mc{K}_{p,q}(\mc{M}) \neq 0$.
 \item For $q<q'$ the induced map $\mc{K}_{p+1,q}(\mc{M}) \oo \mc{S} \lra \mc{K}_{p,q'}(\mc{M}) \oo \mc{S}$ is identically zero.
\end{itemize}

\begin{theorem}\label{thm:pushfwd-resolution}
 The module $M = H^0(\bb{P}V,\mc{M})$ is a finitely generated $S$-module and for each $p\geq 0$, $q\in\bb{Z}$ we have that the vector space $K_{p,q}(M)$ is a subquotient of
\[
\bigoplus_{i,j\geq 0}H^j\left(\bb{P}V,\bw^i(U\oo\mc{R})\oo\mc{K}_{p-i+j,q-j}(\mc{M})\right).
\]
\end{theorem}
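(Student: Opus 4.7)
The plan is to construct a (generally non-minimal) free $S$-resolution of $M$ by combining the given resolution $\mc{F}_\bullet$ of $\mc{M}$ over $\mc{S}$ with a Koszul resolution of $\mc{S}$ over $S \oo \mc{O}_{\bb{P}V}$, and then read off the subquotient bound for $K_{p,q}(M)$ from the associated hypercohomology spectral sequence.

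First, tensoring \eqref{eq:taut-seq} with $U$ yields a short exact sequence of vector bundles $0 \lra U \oo \mc{R} \lra U \oo V \oo \mc{O}_{\bb{P}V} \lra U \oo \mc{Q} \lra 0$ whose induced map $S \oo \mc{O}_{\bb{P}V} \onto \mc{S}$ has kernel locally generated by $U \oo \mc{R}$ as a regular sequence. The corresponding Koszul complex is therefore a locally free $(S \oo \mc{O}_{\bb{P}V})$-resolution of $\mc{S}$. Replacing every summand $\mc{K}_{p,q}(\mc{M}) \oo_\mc{O} \mc{S}$ of $\mc{F}_p$ by the appropriately shifted Koszul resolution produces a double complex $\mc{C}^{\bullet,\bullet}$ with
\[
\mc{C}^{i,p} \;=\; \bigoplus_{q \in \bb{Z}} \bw^i(U \oo \mc{R}) \oo \mc{K}_{p,q}(\mc{M}) \oo S(-p-q-i) \oo \mc{O}_{\bb{P}V},
\]
whose horizontal differential is Koszul and whose vertical differential comes from $\mc{F}_\bullet$. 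Since each row is exact in positive degree and $\mc{F}_\bullet$ is an $\mc{S}$-resolution of $\mc{M}$, the totalization of $\mc{C}^{\bullet,\bullet}$ is a locally free $(S \oo \mc{O}_{\bb{P}V})$-resolution of $\mc{M}$.

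Next, I would apply $R\Gamma(\bb{P}V,-)$ term-wise via a Cartan--Eilenberg resolution, producing a complex of free graded $S$-modules whose $n$-th term is
\[
D_n \;=\; \bigoplus_{\substack{i,\,p,\,q,\,j\,\geq\,0 \\ p+i-j\,=\,n}} H^j\!\left(\bb{P}V,\; \bw^i(U \oo \mc{R}) \oo \mc{K}_{p,q}(\mc{M})\right) \oo S(-p-q-i).
\]
Because $\mc{M}$ has no higher cohomology, the hypercohomology of the totalization is $M$ concentrated in degree $0$, so $D_\bullet$ is a (generally non-minimal) free $S$-resolution of $M$. Finite generation of $M$ follows, since each $\mc{K}_{p,q}$ is coherent (hence $H^j$ is finite-dimensional), $\mc{F}_\bullet$ has finite length, and each $\mc{F}_p$ is supported on finitely many internal degrees.

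Finally, tensoring $D_\bullet$ with $\bb{C}$ over $S$ yields a complex whose homology computes $\Tor^S_\bullet(M, \bb{C})$. The summand of $(D_n \oo \bb{C})_{n+m}$ indexed by $(i,p,q,j)$ contributes precisely when $p + i - j = n$ and $p + q + i = n + m$, i.e.\ $p = n-i+j$ and $q = m-j$, so $K_{n,m}(M)$ is a subquotient of
\[
\bigoplus_{i,\,j\,\geq\,0} H^j\!\left(\bb{P}V,\; \bw^i(U \oo \mc{R}) \oo \mc{K}_{n-i+j,\,m-j}(\mc{M})\right),
\]
as claimed. The main technical hurdle is purely organizational: tracking the three gradings (Koszul index $i$, homological $p$, internal $q$) alongside the sheaf-cohomological index $j$ through the Cartan--Eilenberg totalization, and confirming that the totalized double complex actually resolves $\mc{M}$. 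Once these are in place, the subquotient conclusion is an automatic consequence of convergence, with no need to identify the spectral sequence differentials explicitly.
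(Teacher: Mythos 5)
Your proposal is essentially the paper's own proof: you build the Koszul complex resolving $\mc{S}$ over $S \oo \mc{O}_{\bb{P}V}$ from the tautological subbundle, tensor it against $\mc{F}_\bullet$ to get a locally free double complex resolving $\mc{M}$, apply derived global sections via a Cartan--Eilenberg resolution, invoke the vanishing of higher cohomology of $\mc{M}$ to obtain a free $S$-resolution of $M$, and then tensor with $\bb{C}$ and match internal degrees to read off the subquotient. The paper organizes the same computation by first forming the total complex $\mc{T}$ and then replacing its terms by injective resolutions, but the underlying argument, the role of the three gradings, and the final bookkeeping $p = n - i + j$, $q = m - j$ are identical.
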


\begin{proof}
We have a Koszul complex (we write $S$ in place of $S \otimes \mc{O}_{\bb{P}V}$):
\[
\bb{K} : \cdots \to \bw^i(U \otimes \mc{R})\otimes S \to \cdots \to \bw^2(U \otimes \mc{R}) \otimes S \to (U \otimes \mc{R}) \otimes S \to S \to \mc{S} \to 0
\]
which is acyclic since $\mc{S}$ is locally cut out by linear equations in $S \times \bb{P}V$. Since the terms are all locally free, tensoring $\bb{K}$ with $\mc{K}_{p,q}(\mc{M})$ is an exact operation, so we can replace $\mc{F}_i$ in \eqref{eq:def-F} with $\bb{K} \otimes \mc{F}_i$ to get a double complex whose total complex $\mc{T}$ has homology equal to $\mc{M}$ in degree $0$ and is $0$ in all other degrees. We will label $\mc{T}$ cohomologically, so that
\[
\mc{T}^{-i} = \bigoplus_{j \ge 0} \mc{F}_{i-j} \otimes \bigwedge^j(U \otimes \mc{R}) \otimes S.
\]
Now replace each term in $\mc{T}$ with an injective resolution and take sections to get another double complex whose total complex $\bb{T}$ has terms
\[
\bb{T}^i = \bigoplus_{i,j\geq 0} H^j(\bb{P}V, \mc{T}^i)
\]
such that $H^j(\bb{T}) = H^j(\bb{P}V, \mc{M})$. By assumption, this is $0$ if $j>0$, so we have a long exact sequence of graded $S$-modules
\[
\cdots \to \bb{T}^{-i} \to \bb{T}^{-i+1} \to \cdots \to \bb{T}^{-1} \to \bb{T}^0 \to M \to 0.
\]
In particular, each $\bb{T}^i$ is a free $S$-module, so this is a graded free resolution. Tensoring this with the residue field of $S$, we get a complex whose $p$th homological term in degree $p+q$ is 
\[
\bigoplus_{i,j\geq 0}H^j\left(\bb{P}V,\bw^i(U\oo\mc{R})\oo\mc{K}_{p-i+j,q-j}(M)\right).
\]
Furthermore, this complex computes $\Tor^S_p(M, \bb{C})_{p+q} \cong K_{p,q}(M)$, so we get the desired statement. Finally, $M$ is a finitely generated $S$-module since the assumptions above guarantee that $\Tor^S_0(M,\bb{C})_q$ is nonzero only for finitely many $q$, and each nonzero value is a finite-dimensional vector space.
\end{proof}

\subsection{Representation theory}

Let $m = \dim V$. The irreducible algebraic representations of $\GL(V)$ are given by Schur modules $S_\lambda(V)$ tensored with some power of the determinant character $\det (V)$. Here $\lambda = (\lambda_1, \dots, \lambda_m)$ is a weakly decreasing sequence of non-negative integers, i.e., a partition. We can extend the notation for Schur functors for all weakly decreasing sequences of integers by setting $S_\lambda(V) = S_\mu(V) \otimes \det(V)^{\otimes d}$ where $\mu_i = \lambda_i - d$. The irreducible polynomial representations are exactly given by $S_\lambda(V)$ where $\lambda$ is a partition. These will be the only representations we will be concerned with, though when taking duals, it is convenient to use non-polynomial representations. See \cite[Chapter 2]{weyman} for background, but note that our indexing conventions coincide with the Weyl functors defined there, not the Schur functors.

Let $\Sigma_m$ denote the group of bijections of $[m] = \{1,\dots,m\}$. Given $\sigma \in \Sigma_m$, define its length to be
\[
\ell(\sigma) = \#\{(i,j) \mid 1 \le i < j \le m, \quad \sigma(i) > \sigma(j)\}.
\]
Also define 
\[
\rho = (m-1, m-2, \dots, 1 ,0) \in \bb{Z}^m.
\]
Given $v \in \bb{Z}^m$, define $\sigma(v) = (v_{\sigma^{-1}(1)}, \dots, v_{\sigma^{-1}(m)})$ and $\sigma \bullet v = \sigma(v + \rho) - \rho$. Note that given any $v \in \bb{Z}^m$, either there exists $\sigma \ne 1$ such that $\sigma \bullet v = v$, or there exists a unique $\sigma$ such that $\sigma \bullet v$ is weakly decreasing.

\begin{theorem}[Borel--Weil--Bott]
Let $\alpha = (\alpha_1, \dots, \alpha_{m-1}) \in \bb{Z}^{m-1}$ be a weakly decreasing sequence and set $v = (d, \alpha_1, \dots, \alpha_{m-1})$ for $d \in \bb{Z}$. Exactly one of the following two cases happens:
\begin{enumerate}
\item There exists $\sigma \ne 1$ such that $\sigma \bullet v = v$. In that case, $H^j(\bb{P}V; \mc{Q}^d \otimes S_\alpha(\mc{R})) = 0$ for all $j$.
\item Otherwise, there exists unique $\sigma$ such that $\beta = \sigma \bullet v$ is weakly decreasing, in which case
\[
H^j(\bb{P}V; \mc{Q}^d \otimes S_\alpha(\mc{R})) = 
\begin{cases} 
S_\beta(V) & \text{if $j = \ell(\sigma)$}\\
0 & \text{if $j \ne \ell(\sigma)$} \end{cases}.
\]
The equality denotes that the two terms are isomorphic as $\GL(V)$-representations.
\end{enumerate}
\end{theorem}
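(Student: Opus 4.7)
My plan is to identify the vector bundles $\mc{Q}^d \otimes S_\alpha(\mc{R})$ on $\bb{P}V$ with direct images of equivariant line bundles on the complete flag variety $F = \mc{Fl}(V)$, and then appeal to the classical Borel--Weil--Bott theorem on $F$. Let $\pi : F \to \bb{P}V$ be the natural projection remembering only the rank-one quotient to the top line, and for a weight $v = (d,\alpha_1,\dots,\alpha_{m-1}) \in \bb{Z}^m$ let $\mc{L}_v$ denote the associated $\GL(V)$-equivariant line bundle on $F$.

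The first step is to verify the pushforward identities
\[ \pi_* \mc{L}_v \cong \mc{Q}^d \otimes S_\alpha(\mc{R}), \qquad R^i\pi_*\mc{L}_v = 0 \text{ for } i > 0. \]
The fiber of $\pi$ over a point $[V \twoheadrightarrow \ell]$ is the complete flag variety of the rank-$(m-1)$ subspace $\mc{R}_{[\ell]} \subset V$, and $\mc{L}_v$ restricts on this fiber to the equivariant line bundle associated to the weakly decreasing weight $\alpha$; the first entry $d$ is constant along the fibers and contributes the $\mc{Q}^d$ factor globally. By Borel--Weil applied fiberwise, the derived pushforward is concentrated in degree zero and equals $S_\alpha$ of the relative tautological subbundle, which is $S_\alpha(\mc{R})$ by the definition of the Schur functor. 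Cohomology and base change then ensure that this identification is global.

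Given this pushforward formula, the Leray spectral sequence degenerates and yields $H^j(\bb{P}V, \mc{Q}^d \otimes S_\alpha(\mc{R})) \cong H^j(F, \mc{L}_v)$ for every $j$. It therefore suffices to prove the stated dichotomy for cohomology of line bundles on $F$. For this I would run the standard induction: for each simple reflection $s_i$ swapping adjacent entries $v_i, v_{i+1}$, there is a $\bb{P}^1$-fibration $F \to F_i$ obtained by forgetting the $i$-th flag step, and the long exact sequence of cohomology on $\bb{P}^1$-fibers compares $\mc{L}_v$ with $\mc{L}_{s_i \bullet v}$, shifting cohomological degree by one whenever the reflection moves $v$ closer to dominant. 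Iterating $\ell(\sigma)$ times one lands in the dominant case, handled by the usual Borel--Weil theorem, yielding $H^0 = S_\beta(V)$; in the singular case where $v + \rho$ has a repeat, one of the intermediate relative bundles restricts to $\mc{O}_{\bb{P}^1}(-1)$ on a fiber, which has no cohomology, and the Leray spectral sequence forces everything to vanish.

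The main obstacle is the first step: establishing the pushforward formula relating $\mc{L}_v$ on $F$ to $\mc{Q}^d \otimes S_\alpha(\mc{R})$ on $\bb{P}V$. This requires identifying Schur functors with spaces of sections of line bundles on a flag variety and tracking the tautological flag on $F$ carefully against the single tautological quotient on $\bb{P}V$. Once this reduction is made, the cohomology computation on $F$ is entirely classical; the real work lies in the bookkeeping between weights on $F$ and representations of vector bundles on the base $\bb{P}V$.
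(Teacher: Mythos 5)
The paper does not prove this statement: it simply cites Weyman's book (Corollary 4.1.9), so there is no ``paper's own proof'' to compare against. Your sketch is a correct outline of the standard Demazure-style proof of Borel--Weil--Bott, and it is essentially the argument given in the cited reference: reduce to the complete flag variety $F$ via the fibration $\pi\colon F\to \bb{P}V$, establish $\pi_*\mc{L}_v \cong \mc{Q}^d\otimes S_\alpha(\mc{R})$ with vanishing higher direct images by applying Borel--Weil fiberwise (using that $\alpha$ is dominant), invoke the Leray spectral sequence, and then run the induction over simple reflections using $\bb{P}^1$-fibrations $F\to F_i$ and the relative cohomology exact sequence, with the singular case handled by a fiber restricting to $\mc{O}_{\bb{P}^1}(-1)$. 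So this is not a genuinely different route, just the classical proof that the paper defers to a reference.

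Two small points worth tightening if you were to write this out in full. First, the pushforward step as you state it relies on Grauert/cohomology-and-base-change to promote the fiberwise Borel--Weil identification to an isomorphism of sheaves; you should verify that the relevant fiber cohomology has constant rank (it does, by Borel--Weil) and that the resulting bundle is genuinely $S_\alpha(\mc{R})$ under the paper's sign/indexing conventions --- the paper uses Weyl-functor indexing and rank-one \emph{quotients}, so the tautological flag on $F$ must be set up to match. Second, in the induction step the comparison between $\mc{L}_v$ and $\mc{L}_{s_i\bullet v}$ comes from pushing forward along the $\bb{P}^1$-fibration (Serre duality on $\bb{P}^1$), and one has to keep track of the $\rho$-shift so that $s_i\bullet v$ rather than $s_i(v)$ appears; this is precisely where the dot-action and the length function enter, and it is the reason the cohomological degree shifts by exactly one per reflection.
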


See \cite[Corollary 4.1.9]{weyman}. 

\section{Vanishing of syzygies of Segre products}\label{sec:vanishing}

Consider a sequence of non-negative integers $\ul{a} = (a_1\geq\cdots\geq a_n)$ and suppose that $\ul{V} = (V_1,\cdots,V_n)$ is a tuple of vector spaces with $\dim(V_i) = a_i + 1$ for $i=1,\cdots,n$. The \defi{Segre variety $X=\Seg(\ul{V})$} is the image of the embedding
\[\bb{P}V_1 \times \cdots \times \bb{P}V_n \lra \bb{P}(V_1 \oo \cdots \oo V_n),\quad ([v_1],\cdots,[v_n]) \mapsto [v_1 \oo \cdots \oo v_n].\]
We let $S = \Sym_{\bb{C}}(V_1 \oo \cdots \oo V_n)$ be the homogeneous coordinate ring of $\bb{P}(V_1 \oo \cdots \oo V_n)$ and let
\[R = \bigoplus_{d\geq 0} \Sym^d V_1 \oo \cdots \oo \Sym^d V_n\]
be the homogeneous coordinate ring of $\Seg(\ul{V})$. We will be interested in studying the syzygy modules
\[K_{p,q}(\ul{V}) = \Tor_p^S(R,\bb{C})_{p+q}.\]
The description (\ref{eq:3term-koszul}) of the $K_{p,q}$ groups, where $V = V_1 \oo \cdots \oo V_n$ and $M_q = \Sym^q V_1 \oo \cdots \oo \Sym^q V_n$, shows that the formation of $K_{p,q}(\ul{V})$ is functorial in $V_1,\cdots,V_n$, and in fact it is given by a polynomial functor. In particular, each $K_{p,q}(\ul{V})$ is a representation of the group $\GL = \GL(V_1) \times \cdots \times \GL(V_n)$. The functoriality shows that the vanishing behavior of $K_{p,q}(\ul{V})$ only depends on the dimension vector $\ul{a}$ and not on the vector spaces $V_i$ themselves, so we will often write $K_{p,q}(\ul{a})$ instead of $K_{p,q}(\ul{V})$, with no danger of confusion. The fact that each $K_{p,q}(\ul{V})$ is a polynomial functor shows that
\begin{equation}\label{eq:kpq-specialization}
\mbox{if }a_i \geq b_i \mbox{ for }i=1,\cdots,n\mbox{ and if }K_{p,q}(\ul{a}) = 0\mbox{ then }K_{p,q}(\ul{b}) = 0.
\end{equation}

We now describe the vanishing result. Consider the function $h \colon \bb{Z}\times\bb{Z} \lra \bb{Z}$ defined by
\[
h(x,j)=(x+j)\cdot(j+1),
\]
and define $P(a_1,\cdots,a_n;q)$ recursively for $a_1\geq\cdots\geq a_n\geq 0$ and $q\geq 0$ by the following rules:
\begin{enumerate}
 \item $P(a;0)=0$ and $P(a;q)=\infty$ for $q>0$.
 \item For $n>1$,
 \begin{equation}\label{eq:recursion}
 P(a_1,\cdots,a_n;q)=\min_{0\leq j\leq\min(q,a_n)}h(P(a_1,\cdots,a_{n-1};q-j),j).
 \end{equation}
\end{enumerate}

\begin{theorem}\label{thm:vanishing}
 For $q\geq 0$ and $a_1\geq\cdots\geq a_n\geq 0$, we have
\[K_{p,q}(a_1,\cdots,a_n)=0\rm{ for }0\leq p<P(a_1,\cdots,a_n;q)-q.\]
\end{theorem}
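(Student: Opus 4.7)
The plan is to induct on the number of factors $n$. The base case $n = 1$ is immediate: the Segre embedding is trivial, so $R = S$, $K_{p,q} = 0$ for $(p,q) \ne (0,0)$, matching $P(a;0) = 0$ and $P(a;q) = \infty$ for $q > 0$. For the inductive step, I would apply the Kempf--Weyman geometric technique (Theorem~\ref{thm:pushfwd-resolution}) with $V = V_n$ and $U = V_1 \otimes \cdots \otimes V_{n-1}$. On $\bb{P}V_n$, let $\mc{M}$ be the graded $\mc{S}$-module whose degree-$d$ component is $R^{(n-1)}_d \otimes \mc{Q}^d$, where $R^{(n-1)}$ is the Segre coordinate ring for $\ul{a}^{(n-1)} = (a_1,\ldots,a_{n-1})$; then $H^0(\bb{P}V_n,\mc{M}) = R$ and higher cohomology vanishes since $\mc{Q}^d = \mc{O}(d)$ for $d \ge 0$. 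Sheafifying the minimal free resolution of $R^{(n-1)}$ over $S^{(n-1)} = \Sym(U)$ then yields a minimal $\mc{S}$-free resolution of $\mc{M}$ whose coefficient sheaves satisfy $\mc{K}_{p,q}(\mc{M}) = K_{p,q}(\ul{a}^{(n-1)}) \otimes \mc{Q}^{p+q}$, with the twist dictated by matching graded pieces with those of $\mc{M}$.

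With this setup, Theorem~\ref{thm:pushfwd-resolution} presents $K_{p,q}(\ul{a})$ as a subquotient of
\[
\bigoplus_{i,j \ge 0} K_{p-i+j,\,q-j}(\ul{a}^{(n-1)}) \otimes H^j\bigl(\bb{P}V_n,\, \bw^i(U \otimes \mc{R}) \otimes \mc{Q}^{p+q-i}\bigr).
\]
Supposing $K_{p,q}(\ul{a}) \ne 0$, some summand must survive for a pair $(i,j)$ with $0 \le j \le \min(q, a_n)$ (the bound $j \le a_n$ comes from $\dim \bb{P}V_n = a_n$, and $j \le q$ from the vanishing of $K_{\bullet, q-j}(\ul{a}^{(n-1)})$ when $q - j < 0$). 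I would expand $\bw^i(U \otimes \mc{R}) = \bigoplus_{\lambda \vdash i} S_\lambda(U) \otimes S_{\lambda'}(\mc{R})$ via Cauchy's formula and apply Borel--Weil--Bott on $\bb{P}V_n \cong \bb{P}^{a_n}$ to each summand: writing $d = p + q - i$ and analysing how $v + \rho = (d + a_n,\, \lambda'_1 + a_n - 1,\, \ldots,\, \lambda'_{a_n})$ sorts into strictly decreasing order, non-vanishing of $H^j(S_{\lambda'}(\mc{R}) \otimes \mc{Q}^d)$ forces $\lambda'_j \ge d + j + 1$. Since $\lambda'$ is weakly decreasing, this gives $i = |\lambda'| \ge j(d + j + 1) = j(p + q - i + j + 1)$, equivalently $(j+1)\,i \ge j(p + q + j + 1)$.

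Meanwhile, the induction hypothesis applied to $K_{p-i+j,\,q-j}(\ul{a}^{(n-1)}) \ne 0$ gives $p - i + j \ge P(\ul{a}^{(n-1)};q-j) - (q-j)$, equivalently $i \le (p+q) - P(\ul{a}^{(n-1)};q-j)$. Substituting into the Borel--Weil--Bott inequality yields
\[
(j+1)\bigl((p+q) - P(\ul{a}^{(n-1)};q-j)\bigr) \ge j(p+q+j+1),
\]
which simplifies to $p + q \ge (j+1)\bigl(P(\ul{a}^{(n-1)};q-j) + j\bigr) = h\bigl(P(\ul{a}^{(n-1)};q-j),\,j\bigr)$. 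Taking the minimum over the allowed range of $j$ gives $p + q \ge P(\ul{a};q)$, i.e., $p \ge P(\ul{a};q) - q$, contradicting the hypothesis. I expect the main technical obstacle to be the Borel--Weil--Bott computation---cleanly isolating the inequality $\lambda'_j \ge d + j + 1$ across all summands appearing in the Cauchy decomposition and handling boundary cases (small or negative $d$, partitions $\lambda'$ with fewer than $j$ nonzero parts)---together with the line-bundle bookkeeping needed to justify the identification $\mc{K}_{p,q}(\mc{M}) = K_{p,q}(\ul{a}^{(n-1)}) \otimes \mc{Q}^{p+q}$ and to verify the hypotheses of Theorem~\ref{thm:pushfwd-resolution}.
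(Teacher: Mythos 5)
Your proposal matches the paper's proof essentially step for step: induction on $n$, the geometric technique with $\mc{M} = \bigoplus_d R^{(n-1)}_d \otimes \mc{Q}^d$ on $\bb{P}V_n$, the Cauchy decomposition of $\bw^i(U\otimes\mc{R})$, the Borel--Weil--Bott bound $\lambda'_j \geq p+q-i+j+1$ forcing $i \geq j(p+q-i+j+1)$, and the combination with the inductive bound $p+q-i \geq P(\ul{a}^{(n-1)};q-j)$ to reach $p+q \geq h(P(\ul{a}^{(n-1)};q-j),j) \geq P(\ul{a};q)$. The algebraic rearrangement differs slightly, and you make explicit the bounds $j \leq q$ and $j \leq a_n$ that the paper leaves implicit, but the argument is the same.
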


\begin{remark}
 It is not hard to prove by induction on $n$ that 
 \[P(a_1,\cdots,a_n;q)=\infty\mbox{ for }q>a_2+\cdots+a_n,\]
 which means that for such values of $q$ we have $K_{p,q}(\ul{a})=0$ for all $p\geq 0$. This is a reflection of the fact that the Castelnuovo--Mumford regularity of $R$ is precisely equal to $a_2+\cdots+a_n$, so for the purpose of detecting regularity our result is sharp.
\end{remark}

\begin{remark}\label{rem:2factor-Segre}
 In the case when $n=2$ one gets that $P(a_1,a_2;q)=q^2+q$ for $q\leq a_2$, so $K_{p,q}(a_1,a_2)=0$ whenever $p<q^2$. In this case our result is again sharp, detecting the precise vanishing for the syzygies of $2$-factor Segre products (see \cite[Theorem~6.1.4]{weyman} for the explicit description of all the $K_{p,q}$ groups).
\end{remark}

The behavior of the function $P(a_1,\cdots,a_n;q)$ is hard to describe in general, so we specialize to the case when $a_1=\cdots=a_n=a$ when a more concrete formula can be given. In this case we have

\begin{lemma}\label{lem:Paaa}
 Consider non-negative integers $a,q,r$, such that $(r-1)a<q\leq ra$. If we let $q_0=q-(r-1)a$, and write $a^n$ for the sequence $(a_1,\cdots,a_n)$ with $a_i=a$, then we have
\[P(a^n;q)=\begin{cases}
(q_0^2+q_0)\cdot(a+1)^{r-1}+(a+1)^r-(a+1) & \rm{for }n>r, \\
\infty & \rm{otherwise}.
\end{cases}
\]
\end{lemma}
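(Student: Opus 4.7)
The plan is to induct on $n \geq 1$, verifying both clauses of the formula simultaneously. Denote the claimed finite value by $\Phi(q_0, r) := (q_0^2 + q_0)(a+1)^{r-1} + (a+1)^r - (a+1)$. A direct algebraic check yields the one-variable recurrence
\[
\Phi(q_0, r) = (\Phi(q_0, r-1) + a)(a+1) \quad (r \geq 2), \qquad \Phi(q_0, 1) = q_0(q_0+1),
\]
which drives the upper-bound computation. The base case $n = 1$ is immediate: for $q = 0$ we have $r = 0$ and $P(a; 0) = 0$; for $q \geq 1$ we have $r \geq 1 \geq n$ so $P(a; q) = \infty$.

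For the inductive step, fix $q$ with associated $(r, q_0)$ and apply the recursion (\ref{eq:recursion}). Analyzing $q - j$ gives $(q_{0,j}, r_j) = (q_0 - j, r)$ when $0 \leq j < q_0$, and $(q_{0,j}, r_j) = (a + q_0 - j, r - 1)$ when $q_0 \leq j \leq \min(q, a)$. In the case $n \leq r$, every $r_j \geq r - 1 \geq n - 1$, so by induction each $P(a^{n-1}; q - j) = \infty$, and hence $P(a^n; q) = \infty$. In the case $n > r$, the upper bound $P(a^n; q) \leq \Phi(q_0, r)$ follows from the single choice $j = a$ when $q \geq a$ (giving $(\Phi(q_0, r - 1) + a)(a + 1) = \Phi(q_0, r)$ by the recurrence), or $j = q_0$ when $r = 1$ (giving $(0 + q_0)(q_0 + 1) = \Phi(q_0, 1)$).

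The matching lower bound---that every term $h(\Phi(q_{0,j}, r_j), j) \geq \Phi(q_0, r)$ in the recursion---is where the main work lies. It reduces to the two polynomial inequalities $(\Phi(q_0 - j, r) + j)(j + 1) \geq \Phi(q_0, r)$ for $1 \leq j \leq q_0 - 1$ (relevant only when $n \geq r + 2$) and $(\Phi(a + q_0 - j, r - 1) + j)(j + 1) \geq \Phi(q_0, r)$ for $q_0 \leq j \leq \min(q, a)$, both of which can be checked by expanding in $q_0, j$ and collecting powers of $a + 1$. A conceptually cleaner alternative reinterprets $P(a^n; q)$ as the minimum of $h_{j_n} \circ \cdots \circ h_{j_1}(0)$ over sequences $(j_1, \ldots, j_n)$ with $j_i \in [0, a]$ and $\sum j_i = q$, then combines the exchange identity $h(h(x, \alpha), \beta) - h(h(x, \beta), \alpha) = -\alpha\beta(\beta - \alpha)$ (forcing optimal sequences to be non-decreasing) with a combining step $(j_i, j_{i+1}) \mapsto (0, j_i + j_{i+1})$ valid when $j_i + j_{i+1} \leq a$. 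Either approach drives the optimum to the canonical sequence $(0, \ldots, 0, q_0, a, \ldots, a)$, whose cost is $\Phi(q_0, r)$. The main obstacle is the bookkeeping, since non-canonical sequences can tie for the minimum (e.g., both $(2, 3)$ and $(1, 1, 3)$ attain the minimum when $a = 3, r = 2, q_0 = 2$), requiring careful tracking of weak versus strict inequalities at each reduction step.
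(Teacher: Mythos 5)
Your structure matches the paper's proof closely: induction on $n$, the same identification of the base case, the observation that $n \le r$ forces $r_j \ge n-1$ and hence $\infty$, the upper bound via a single explicit $j$ (the paper uses $j=0$ when $r \le n-2$ and $j=a$ when $r=n-1$; your $j=a$ / $j=q_0$ choice also works and is validated by the clean recurrence $\Phi(q_0,r)=(\Phi(q_0,r-1)+a)(a+1)$, which the paper uses implicitly), and the reduction of the lower bound to exactly the same two families of inequalities --- one for $j<q_0$ with $r_j=r$, one for $j\ge q_0$ with $r_j=r-1$. The $\Phi$-recurrence and the exchange identity $h(h(x,\alpha),\beta)-h(h(x,\beta),\alpha)=-\alpha\beta(\beta-\alpha)$ are both correct and genuinely illuminating observations.

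The gap is that the lower-bound inequalities, which are the actual content of the lemma, are asserted rather than proved. ``Both of which can be checked by expanding in $q_0,j$ and collecting powers of $a+1$'' is the claim that needs to be the proof, not a substitute for it: the paper spends the majority of its argument carrying out exactly this verification (e.g.\ reducing the first family to $((q'_0)^2+a-q_0)(a+1)^{r-1}\ge a-j$ and the second to $(q'_0 a + q_0 a + q_0 - (q'_0)^2)(a+1)^{r-2}\ge j$, then bounding each factor). These manipulations are routine but not automatic --- one has to divide by $j$ or $a-j$ at the right moment and handle the boundary equalities $j=0$, $j=a$ separately --- and until they are written out you have not shown that the minimum over $j$ is attained at your canonical choice. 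The alternative route via the exchange lemma is also attractive but you yourself flag that it requires nontrivial bookkeeping (ties between non-canonical and canonical sequences) and you do not resolve it. Either route needs to be completed; as written, the heart of the lemma is missing.
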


\begin{remark}\label{rem:a-divides-q}
 If we take $r=n-1$ and $q=ra$ then we get $P(a^n;(n-1)a)=(a+1)^n-(a+1)$. Theorem~\ref{thm:vanishing} then predicts that
\[K_{p,(n-1)a}(a^n)=0\rm{ for }p<(a+1)^n-1-na.\]
Since $R$ is Gorenstein of projective dimension $(a+1)^n-1-na$, and since its Castelnuovo--Mumford regularity is $(n-1)a$, it follows that $K_{p,(n-1)a}(a^n) \neq 0$ if and only if $p = (a+1)^n-1-na$, so our vanishing result is again sharp in this case. In fact this argument shows that for every $0\leq r < n$ we have that
\[K_{(a+1)^{r+1}-1-(r+1)a,ra}(a^{r+1}) \neq 0\]
which by (\ref{eq:kpq-specialization}) implies that 
\[K_{(a+1)^{r+1}-1-(r+1)a,ra}(a^n) \neq 0,\]
and since $P(a^n;ra) - ra = (a+1)^{r+1}-1-(r+1)a$, the vanishing result is sharp whenever $a$ divides $q$.
\end{remark}

\begin{proof}[Proof of Lemma~\ref{lem:Paaa}]
 We prove the formula for $P(a^n;q)$ by induction on $n$: when $n=1$, it asserts that $P(a;0) = 0$ and $P(a;q) = \infty$ for $q>0$, which is just condition (1) in the definition of the function $P$. If $n=2$ then our formula asserts that $P(a^2;q) = q^2+q$ for $0\leq q\leq a$ and $P(a^2;q) = \infty$ for $q>a$, which is a special case of Remark~\ref{rem:2factor-Segre}. If $q=0$ then $r=1$, $q_0=a$ and our formula reads
 \[P(a^n;0) = (a^2+a)\cdot(a+1)^{-1} + (a+1)^0 - (a+1) = 0,\]
 which follows from (\ref{eq:recursion}) since $P(a^n;0) = P(a^{n-1};0)$.
 
 We assume from now on that $n\geq 3$ and $q\geq 1$. If $q>(n-1)a$ then $q-j > (n-2)a$ for every $0\leq j\leq\min(q,a)$ and therefore $P(a^{n-1},q-j) = \infty$ by induction. It follows from (\ref{eq:recursion}) that $P(a^n;q) = \infty$ as well. We may then assume that $q\leq(n-1)a$, so we can find $1\leq r<n$ satisfying $(r-1)a<q\leq ra$, and we can set $q_0 = q - (r-1)a$.
 
 We begin by proving that $P(a^n;q) \leq (q_0^2+q_0)\cdot(a+1)^{r-1}+(a+1)^r-(a+1)$, for which according to (\ref{eq:recursion}) it is sufficient to find $0\leq j\leq\min(q,a)$ such that
 \begin{equation}\label{eq:upperbound-Panq}
 (P(a^{n-1};q-j) + j ) \cdot (j+1) = (q_0^2+q_0)\cdot(a+1)^{r-1}+(a+1)^r-(a+1)
 \end{equation}
 If $r\leq n-2$ then it follows by induction that $P(a^{n-1};q) = (q_0^2+q_0)\cdot(a+1)^{r-1}+(a+1)^r-(a+1)$ so we can take $j=0$ in (\ref{eq:upperbound-Panq}). If $r=n-1\geq 2$ then we get by induction that 
 \[P(a^{n-1};q-a) = (q_0^2+q_0)\cdot(a+1)^{r-2}+(a+1)^{r-1}-(a+1)\]
and since $q > (r-1)a \geq a$, we can take $j=a$ in (\ref{eq:upperbound-Panq}).
 
To conclude our induction we need to verify that
  \begin{equation}\label{eq:lowerbound-Panq}
 (P(a^{n-1};q-j) + j ) \cdot (j+1) \geq (q_0^2+q_0)\cdot(a+1)^{r-1}+(a+1)^r-(a+1) \mbox{ for all }0\leq j\leq\min(q,a).
 \end{equation}
We let $q' = q-j$ and consider two possible scenarios according to whether $q'>(r-1)a$ or $q'\leq(r-1)a$.

\ul{$(r-1)a < q' \leq ra$}: we set $q'_0 = q' - (r-1)a = q_0 - j$. If $r=n-1$ then $P(a^{n-1};q')=\infty$ so (\ref{eq:lowerbound-Panq}) is trivial. We may thus assume that $r\leq n-2$ and therefore
\[P(a^{n-1};q') = ((q'_0)^2+q'_0)\cdot(a+1)^{r-1}+(a+1)^r-(a+1).\]
Using the fact that $q_0^2+q_0 - ((q'_0)^2+q'_0) = j\cdot(q_0 + q'_0 + 1)$ we can rewrite (\ref{eq:lowerbound-Panq}) as
\[P(a^{n-1};q')\cdot j + j\cdot(j+1) \geq j\cdot(q_0 + q'_0 + 1) \cdot (a+1)^{r-1}.\]
If $j=0$ this is an equality, otherwise we can divide by $j$ and rewrite it as
\[((q'_0)^2+q'_0+a+1)\cdot(a+1)^{r-1} + j-a \geq (q_0 + q'_0 + 1) \cdot (a+1)^{r-1}\]
which is equivalent to 
\[((q'_0)^2+a-q_0)\cdot(a+1)^{r-1} \geq a-j.\]
Since $q'_0\geq 1$ we get $(q'_0)^2 + a - q_0 \geq q'_0 + a - q_0 = a-j$, while since $r\geq 1$ we get $(a+1)^{r-1}\geq 1$, from which the inequality follows.

\ul{$(r-2)a < q' \leq (r-1)a$}: we set $q'_0 = q' - (r-2)a = q_0 + a - j$. If $r=1$ then $q'=0$, $q_0=q=j$, and (\ref{eq:lowerbound-Panq}) is an equality. We may thus assume that $r\geq 2$, in which case
\[P(a^{n-1};q') = ((q'_0)^2+q'_0)\cdot(a+1)^{r-2}+(a+1)^{r-1}-(a+1).\]
Using the fact that $(q'_0)^2+q'_0 - (q_0^2 + q_0) = (a-j)\cdot(q'_0 + q_0 + 1)$, and adding $P(a^{n-1};q')\cdot(a-j) - j(j+1) + (a+1)^2$ to both sides of (\ref{eq:lowerbound-Panq}) we can rewrite the inequality as
\[(a-j)\cdot(q'_0 + q_0 + 1)\cdot(a+1)^{r-1} \geq (a-j)\cdot (P(a^{n-1};q') + j + a + 1).\]
If $a=j$ then this is an equality, otherwise we can divide by $a-j$ and rewrite it as
\[(q'_0 + q_0)\cdot(a+1)^{r-1} \geq ((q'_0)^2+q'_0)\cdot(a+1)^{r-2} + j,\]
which is equivalent to
\[(q'_0\cdot a + q_0\cdot a + q_0 - (q'_0)^2)\cdot(a+1)^{r-2}\geq j.\]
Since $r\geq 2$ we get $(a+1)^{r-2}\geq 1$. Since $q'_0\leq a$ we have $q'_0\cdot a - (q'_0)^2 \geq 0$. Since $q_0>0$, we get $q_0\cdot a + q_0 \geq a+1 > j$, so the desired inequality follows, concluding the proof.
\end{proof}

\begin{proof}[Proof of Theorem~\ref{thm:vanishing}]
We prove the vanishing result by induction on $n$, the case $n=1$ being trivial: $R=S$ and $K_{p,q}(a_1)=0$ unless $p=q=0$.

Assume from now on that $n>1$, and consider the tautological sequence on $\bb{P}V_n$:
\[0\lra\mc{R}\lra V_n\oo\mc{O}_{\bb{P}V_n}\lra\mc{Q}\lra 0,\]
where $\mc{Q}$ is the tautological rank one quotient bundle, and $\mc{R}$ is the tautological subbundle. We use the notation (\ref{eq:S-vs-mcS}), with $U = V_1 \oo \cdots \oo V_{n-1}$ and $V = V_n$. We consider the quasi-coherent sheaf of $\mc{S}$-algebras
\[\mc{M} = \bigoplus_{d\geq 0} \Sym^d V_1 \oo \cdots \oo \Sym^d V_{n-1} \oo \mc{Q}^d,\]
and note that $R = H^0(\bb{P}V_n,\mc{M})$. By functoriality, $\mc{M}$ has a minimal graded free resolution (\ref{eq:def-F}) where
\[\mc{K}_{p,q}(\mc{M}) = K_{p,q}(V_1,\cdots,V_{n-1}) \oo \mc{Q}^{p+q}.\]

Theorem~\ref{thm:pushfwd-resolution} yields that $K_{p,q}(V_1,\cdots,V_n)$ is a subquotient of
\begin{subequations}
\begin{equation}\label{eq:dirsumcoh}
\bigoplus_{i,j\geq 0}H^j\left(\bb{P}V_n,\bw^i(V_1\oo\cdots\oo V_{n-1}\oo\mc{R})\oo\mc{Q}^{p+q-i}\right)\oo K_{p-i+j,q-j}(V_1,\cdots,V_{n-1}). 
\end{equation}
Suppose that the conclusion of Theorem~\ref{thm:vanishing} is false, so that $K_{p,q}(a_1,\cdots,a_n)\neq 0$ for some $p,q$ with
\begin{equation}\label{eq:p+q=small}
p+q<P(a_1,\cdots,a_n;q). 
\end{equation}
We can therefore find $i,j\geq 0$ such that the corresponding term in (\ref{eq:dirsumcoh}) is non-zero. It follows that $K_{p-i+j,q-j}(V_1,\cdots,V_{n-1})\neq 0$, so we get by induction that
\begin{equation}\label{eq:ineq1}
p+q-i\geq P(a_1,\cdots,a_{n-1};q-j). 
\end{equation}
Moreover, we must also have $H^j(\bb{P}V_n,\bw^i(V_1\oo\cdots\oo V_{n-1}\oo\mc{R})\oo\mc{Q}^{p+q-i})\neq 0$. Using Cauchy's identity \cite[Corollary 2.3.3]{weyman}, to get the decomposition
\[
\bw^i(V_1\oo\cdots\oo V_{n-1}\oo\mc{R}) = \bigoplus_{|\ll| = i} S_{\ll}\mc{R} \oo S_{\ll^\dagger}(V_1\oo\cdots\oo V_{n-1})
\]
(here $\ll^\dagger$ is the transpose partition of $\ll$), we conclude that there exists a partition $\ll$ of $i$ such that $H^j(\bb{P}V_n,S_{\ll}\mc{R}\oo\mc{Q}^{p+q-i})\neq 0$, which by Bott's theorem is possible only when $\ll_j\geq p+q-i+j+1$, which in particular implies
\[
i = |\ll| \geq \ll_1+\cdots+\ll_j \ge j\cdot \ll_j \geq j\cdot(p+q-i+j+1).
\]
For such a partition to exist it is thus necessary that
\begin{equation}\label{eq:ineq2}
i\geq j\cdot(p+q-i+j+1)=j\cdot(j+1)+j\cdot(p+q-i)\overset{(\ref{eq:ineq1})}{\geq}j\cdot(j+1)+j\cdot P(a_1,\cdots,a_{n-1};q-j).
\end{equation}
Combining (\ref{eq:ineq1}) with (\ref{eq:ineq2}) we get that
\[p+q=(p+q-i)+i\geq (P(a_1,\cdots,a_{n-1};q-j)+j)\cdot(j+1)=h(P(a_1,\cdots,a_{n-1};q-j),j),\]
which contradicts (\ref{eq:p+q=small}).
\end{subequations}
\end{proof}

\section{The non-vanishing of $K_{p,1}$ via Artinian reduction}\label{sec:non-vanishing-Kp1}

Given a sequence of positive integers $\ul{a}=(a_1\geq\cdots\geq a_n)$, we let $X=\rm{Seg}(a_1,\cdots,a_n)$ denote the image of the Segre embedding
\[\bb{P}^{a_1}\times\cdots\times\bb{P}^{a_n}\lra\bb{P}^A,\]
where $A=(a_1+1)\cdots(a_n+1)-1$. Let $S$ denote the homogeneous coordinate ring of $\bb{P}^A$, let $I\subset S$ denote the ideal of equations vanishing on $X$, and let $R=S/I$ denote the homogeneous coordinate ring of $X$. If we let $K_{p,1}(\ul{a}) = K_{p,1}^S(R)$ then the main result of this section is the following.

\begin{theorem}\label{thm:nonvanishing-Kp1}
With notation as above we have
\[K_{p,1}(\ul{a}) \neq 0 \mbox{ for } 1\leq p \leq (a_1+1)\cdots(a_{n-1}+1) + a_n - 2.\]
\end{theorem}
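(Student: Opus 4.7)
The plan is to reduce the non-vanishing to the two-factor case via a flattening of the first $n-1$ factors, and then invoke the classical description of syzygies of $2\times 2$ minors of a generic matrix. Set $M=(a_1+1)\cdots(a_{n-1}+1)$ and view the Segre embedding as a composition
\[\bb{P}^{a_1}\times\cdots\times\bb{P}^{a_n}\hra\bb{P}^{M-1}\times\bb{P}^{a_n}\hra\bb{P}^A,\]
where the first arrow is the Segre map on the first $n-1$ factors.  Both the image of the composition (our Segre $X=\Seg(\ul{a})$) and the two-factor Segre $Y=\Seg(M-1,a_n)$ live inside the same $\bb{P}^A$, with $X\subseteq Y$.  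Hence the defining ideals in $S$ satisfy $I_Y\subseteq I_X$, and we obtain a surjection $R'=S/I_Y\twoheadrightarrow R$ with kernel $J=I_X/I_Y$.

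Both $I_X$ and $I_Y$ are generated by quadrics---the $2\times 2$ minors of the various flattenings of the tensor product $V_1\oo\cdots\oo V_n$---so $J$ is generated in degree $2$ as an $S$-module.  Every minimal $S$-free resolution of $J$ then has its $p$-th term concentrated in degrees $\geq p+2$, giving the key vanishing $\Tor_p^S(J,\bb{C})_{p+1}=0$ for all $p\geq 0$.  Applying $\Tor_\bullet^S(-,\bb{C})$ to $0\to J\to R'\to R\to 0$ and reading off the degree $p+1$ component of the long exact sequence, this vanishing yields an injection
\[K_{p,1}(M-1,a_n)\hra K_{p,1}(\ul{a})\qquad\text{for every }p\geq 0.\]

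It therefore suffices to show that $K_{p,1}(M-1,a_n)\neq 0$ for $1\leq p\leq M+a_n-2$.  But this is the linear strand of the ideal of $2\times 2$ minors of a generic $M\times(a_n+1)$ matrix, whose Betti table is classical: by Lascoux's formula (see \cite[Theorem~6.1.4]{weyman}), the linear strand is non-zero in exactly the range $1\leq p\leq(M-1)+a_n-1=M+a_n-2$, as required.  To align with the Artinian reduction philosophy of the section title, one may first use \eqref{eq:reduction-syz} to replace $R,R'$ by their Artinian quotients modulo a common regular sequence of linear forms in $R$ and then execute the long-exact-sequence argument in the finite-dimensional setting; this also makes the extremal Koszul cycles coming from the Eagon--Northcott-type resolution of $R'$ concrete.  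The main obstacle is establishing the degree-$2$ generation of $J$, which rests on the classical description of the Segre ideal by $2\times 2$ minors of its flattenings; once this is in hand, everything else is formal.
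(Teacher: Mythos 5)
Your proposal is correct, and it takes a genuinely different route from the paper.

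The paper proves Theorem~\ref{thm:nonvanishing-Kp1} by passing to the Artinian reduction $\ol R$, choosing the element $z = z_{0,\dots,0,1}\in\ol R_1$ and a large linearly independent set $\mathcal Z(\mathcal V)\subset\ol R_1$ of annihilators of $z$ (with $|\mathcal V| = (a_1+1)\cdots(a_{n-1}+1)+a_n-2$), and then exhibiting the explicit Koszul cycles $\bigwedge_{\ul w\in\mathcal W} z_{\ul w}\otimes z$ and verifying they are not boundaries by lifting to the symmetric algebra. Your argument instead reduces to the two-factor case: flattening the first $n-1$ factors gives $X\subseteq Y=\Seg(M-1,a_n)$ with $M=(a_1+1)\cdots(a_{n-1}+1)$, so that $I_Y\subseteq I_X$ and $J=I_X/I_Y$ is generated in degree $2$ (both ideals being spanned by $2\times 2$ minors of flattenings). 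Since a minimal resolution of a module generated in degree $\ge 2$ has its $p$-th term in degrees $\ge p+2$, you get $\Tor_p^S(J,\bb C)_{p+1}=0$, and the long exact sequence for $0\to J\to R'\to R\to 0$ in internal degree $p+1$ yields the injection $K_{p,1}(M-1,a_n)\hookrightarrow K_{p,1}(\ul a)$. The non-vanishing then follows from the Lascoux description of the resolution of $2\times 2$ minors of a generic $M\times(a_n+1)$ matrix, whose linear strand is non-zero exactly for $1\le p\le (M-1)+a_n-1=M+a_n-2$. (One uses $M-1\ge a_1\ge a_n$ so that this invocation respects the paper's ordering convention; when $n=1$ the assertion is vacuous.)

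The trade-off is worth noting. Your reduction is shorter and cleaner, and the inclusion $K_{p,1}(\Seg(M-1,a_n))\hookrightarrow K_{p,1}(\Seg(\ul a))$ is a general-purpose tool (in fact the same filtration-of-ideals argument gives $K_{p,1}$-monotonicity for any nested pair of varieties cut out by quadrics). However, it relies on the known equivariant resolution of two-factor Segres, whereas the paper's construction is self-contained and, more importantly, serves as a warm-up for the harder non-vanishing in Section~\ref{sec:non-vanishing-P1}: the Artinian-reduction-plus-explicit-cycle technique of Section~\ref{sec:non-vanishing-Kp1} is precisely what is elaborated there using the standard basis of $\ol R$. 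Your proof would not provide that scaffolding, which is presumably why the authors chose the approach they did. The Artinian-reduction remark at the end of your proposal (re-running the long exact sequence over $\ol S$) is optional and changes nothing.
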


To prove the theorem we will construct an explicit Artinian reduction $\ol{R}$ of $R$ obtained by modding out by a regular sequence of linear forms. Inspired by \cite{EEL}, we then exhibit explicit cycles in Koszul homology in order to prove the non-vanishing of $K_{p,1}^{\ol{S}}(\ol{R})$, which by (\ref{eq:reduction-syz}) is equivalent to the non-vanishing of $K_{p,1}(\ul{a})$.

We begin by writing
\[S = \bb{C}[z_{i_1,\cdots,i_n} : 0\leq i_k\leq a_k\mbox{ for }k=1,\cdots,n]\]
in which case the ideal $I$ will be generated by binomials
\[z_{i_1,\cdots,i_n} \cdot z_{j_1,\cdots,j_n} - z_{i'_1,\cdots,i'_n} \cdot z_{j'_1,\cdots,j'_n},\mbox{ where }\{i_k,j_k\} = \{i'_k,j'_k\}\mbox{ as multisets for each }k=1,\cdots,n.\]

We next consider the poset
\begin{equation}\label{eq:def-Pa}
\mc{P}(\ul{a})=\{\ul{i}=(i_1,\cdots,i_n):0\leq i_k\leq a_k\rm{ for }k=1,\cdots,n\},
\end{equation}
with the ordering given by $\ul{i}\leq\ul{j}$ if $i_k\leq j_k$ for all $k$. We write $|\ul{i}| = i_1+\cdots+i_n$, let
\begin{align*}
\min(\ul{i},\ul{j}) &= (\min(i_1,j_1),\cdots,\min(i_n,j_n)),\\
\max(\ul{i},\ul{j}) &= (\max(i_1,j_1),\cdots,\max(i_n,j_n)).
\end{align*}
The coordinate ring $R$ of the Segre variety is an ASL (algebra with straightening law, see \cites{eisenbud-ASL,hodge} for an introduction to this subject) on the poset $\mc{P}(\ul{a})$, with the straightening law
\begin{equation}\label{eq:SL}
z_{\ul{i}}\cdot z_{\ul{j}}=z_{\min(\ul{i},\ul{j})}\cdot z_{\max(\ul{i},\ul{j})}.
\end{equation}
We define for $0\leq r\leq a_1+\cdots+a_n$ the element
\begin{equation}\label{eq:defxr}
 x_r = \sum_{|\ul{i}|=r} z_{\ul{i}}.
\end{equation}

It follows from \cite[Proposition~2.1]{bruns-ASL} that (\ref{eq:defxr}) provides a system of parameters for the maximal homogeneous ideal of $R$. We write 
\[ \ol{S} = S / (x_0,\cdots,x_{a_1+\cdots+a_n})\qquad\mbox{ and }\qquad\ol{R} = R / (x_0,\cdots,x_{a_1+\cdots+a_n}) = R\oo_S \ol{S}.\]
Since $R$ is Cohen--Macaulay, $\ol{R}$ is a finite dimensional vector space (called an \defi{Artinian reduction} of $R$) of dimension equal to the degree of the Segre variety. This is computed by the multinomial coefficient 
 \begin{align} \label{eq:deg-segre}
\deg(R) = \dim_{\bb{C}}(\ol{R}) = {a_1+\cdots+a_n \choose a_1,\cdots,a_n} = \frac{(a_1+\cdots+a_n)!}{a_1! \cdots a_n!}.
\end{align}
We will construct an explicit basis for $\ol{R}$ below, but before doing so we use the Artinian reduction $\ol{R}$ to prove the non-vanishing of the $K_{p,1}$ groups. We start with the following.

\begin{lemma}\label{lem:basis-R1}
 To any subset $\mc{V} \subseteq \mc{P}(\ul{a})$ we associate a set of linear forms in $\ol{R}_1$
 \begin{equation}\label{eq:Z-of-V}
  \mc{Z}(\mc{V}) = \{z_{\ul{v}} : \ul{v} \in \mc{V}\}.
 \end{equation}
 We have that $\mc{Z}(\mc{V})$ is a linearly independent set if and only if for each $0\leq k \leq |\ul{a}|$ there exists an element $\ul{v}^k \in \mc{P}(\ul{a}) \setminus \mc{V}$ with $|\ul{v}^k| = k$. The set $\mc{Z}(\mc{V})$ is a basis of $\ol{R}_1$ if and only if the choice for $\ul{v}^k$ is unique for each $k$.
\end{lemma}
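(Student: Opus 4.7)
The plan is to exploit the very special structure of $x_0, \ldots, x_{|\ul{a}|}$: each generator $z_{\ul{i}}$ of $R_1$ appears in exactly one of them, namely in $x_{|\ul{i}|}$, and with coefficient $1$. Thus the linear span of the $x_r$'s inside $R_1$ is compatible with the grading of $R_1$ by ``total index degree'' $r = |\ul{i}|$, and passing to the quotient produces a direct sum decomposition
\[
\ol{R}_1 \;=\; \bigoplus_{r=0}^{|\ul{a}|} W_r, \qquad W_r := \mathrm{span}\{z_{\ul{i}} : |\ul{i}|=r\}\big/\bb{C}\cdot x_r,
\]
and the image of each $z_{\ul{v}}$ in $\ol{R}_1$ lies entirely in the single slice $W_{|\ul{v}|}$.

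Next, I would analyze a single slice. The space $W_r$ is a coordinate vector space, with basis indexed by $\mc{P}(\ul{a})_r := \{\ul{i} : |\ul{i}| = r\}$, modulo the unique relation that all coordinates sum to zero. A routine linear algebra check then shows that for any $\mc{V}_r \subseteq \mc{P}(\ul{a})_r$, the images of $\{z_{\ul{v}} : \ul{v} \in \mc{V}_r\}$ in $W_r$ are linearly independent exactly when $\mc{V}_r \neq \mc{P}(\ul{a})_r$, span $W_r$ exactly when $|\mc{P}(\ul{a})_r \setminus \mc{V}_r| \leq 1$, and form a basis of $W_r$ exactly when $\mc{P}(\ul{a})_r\setminus\mc{V}_r$ consists of a single element.

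To conclude, I would assemble the slices. Writing $\mc{V}_r := \mc{V} \cap \mc{P}(\ul{a})_r$, the set $\mc{Z}(\mc{V})$ decomposes as the disjoint union $\bigsqcup_r \mc{Z}(\mc{V}_r)$ compatibly with $\ol{R}_1 = \bigoplus_r W_r$, so linear independence (respectively being a basis) holds for $\mc{Z}(\mc{V})$ in $\ol{R}_1$ if and only if it holds in every slice $W_r$ separately. This immediately translates into the statement of the lemma: the existence of some $\ul{v}^k \in \mc{P}(\ul{a})\setminus\mc{V}$ for each $k \in \{0,\ldots,|\ul{a}|\}$ corresponds to linear independence, and uniqueness of $\ul{v}^k$ corresponds to being a basis. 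I do not anticipate a serious obstacle; the only point that deserves attention is verifying the direct sum decomposition of $\ol{R}_1$ in the first step, which is really just the bookkeeping observation that $x_r$ is supported entirely on the $r$-th multigraded piece.
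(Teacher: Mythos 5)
Your proposal is correct and is essentially the argument the paper gives, merely written out with more scaffolding: the paper notes that $\ol{R}_1$ is the quotient of $R_1$ by the relations $\sum_{|\ul{v}|=k} z_{\ul{v}} = 0$, which are independent because they involve disjoint sets of variables, and this is exactly your direct sum decomposition $\ol{R}_1 = \bigoplus_r W_r$. The slice-by-slice linear algebra you describe is the routine ``hence the conclusion follows'' step in the paper.
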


\begin{proof}
 We have that $\{z_{\ul{u}}:\ul{u}\in\mc{P}(\ul{a})\}$ is a basis of $R_1=S_1$. It follows from (\ref{eq:defxr}) that $\ol{R}_1$ is the quotient of $R_1$ by the relations
 \begin{equation}\label{eq:rels-z}
 \sum_{\substack{\ul{v}\in \mc{P}(\ul{a}) \\ |\ul{v}| = k }} z_{\ul{v}} = 0, \qquad 0\leq k\leq |\ul{a}|.
 \end{equation}
These relations are independent since they involve disjoint sets of variables, hence the conclusion follows.
\end{proof}


\begin{proof}[Proof of Theorem~\ref{thm:nonvanishing-Kp1}]
 Consider the element $z = z_{0,0,\cdots,0,1} \in \ol{R}$. Note that by (\ref{eq:SL}) and the fact that $x_0 = z_{\ul{0}} = 0$ in $\ol{R}_1$, it follows that $z$ is annihilated by
 \[z_{i_1,\cdots,i_{n-1},0}\mbox{ where } 0\leq i_k\leq a_k\mbox{ for }k=1,\cdots,n-1,\mbox{ and }i_k\neq 0\mbox{ for at least one value of }k.\]
 We claim that $z$ is also annihilated by
 \[z_{0,\cdots,0,j}\mbox{ where }j=2,\cdots,a_n.\]
 To see this, we use (\ref{eq:rels-z}) to express
 \[ 
z_{0,0,\cdots,0,1} = - z_{1,0,\cdots,0,0} - z_{0,1,\cdots,0,0} - \cdots - z_{0,0,\cdots,1,0},
\]
 and apply (\ref{eq:SL}). It follows from Lemma~\ref{lem:basis-R1} that the set $\mc{Z}({\mc{V}})$ is linearly independent in $\ol{R}_1$ where
 \begin{align*}
\mc{V} &= \{(i_1,\cdots,i_{n-1},0) : 0\leq i_k\leq a_k\mbox{ and some }i_k \neq 0\mbox{ for }1\leq k\leq n-1\}\\
& \qquad \qquad \cup \{(0,\cdots,0,j): 2\leq j \leq a_n\}.
\end{align*}
Observe that $|\mc{V}| = (a_1+1)\cdots(a_{n-1}+1) + a_n - 2$. Using the fact that $\ol{S}_1 = \ol{R}_1$, in order to prove the non-vanishing of $K_{p,1}(\ul{a}) = K_{p,1}^{\ul{S}}(\ul{R})$ it is enough to construct a $p$-cycle in the $3$-term complex obtained by setting $V=\ol{S}_1=\ol{R}_1$ and $M=\ol{R}$ in (\ref{eq:3term-koszul}),
 \[
  \bw^{p+1}\ol{R}_1 \lra \bw^p \ol{R}_1 \oo \ol{R}_1 \lra \bw^{p-1}\ol{R}_1 \oo \ol{R}_2,
 \]
 and to show that this cycle is not a boundary. Fix $1\leq p\leq |\mc{V}|$, consider an arbitrary subset $\mc{W} \subset \mc{V}$ with $|\mc{W}| = p$, and define
 \[
   c = \left(\bw_{\ul{w} \in \mc{W}} z_{\ul{w}} \right)\oo z \in \bw^p \ol{R}_1 \oo \ol{R}_1.
 \]
 Since for all $\ul{w} \in \mc{W}$ we have $z\cdot z_{\ul{w}} = 0$ in $\ol{R}_2$, it follows that $c$ is a cycle. If $c$ was a boundary, then it would also be a cycle in the Koszul complex
  \[
  \bw^{p+1}\ol{R}_1 \lra \bw^p \ol{R}_1 \oo \ol{R}_1 \lra \bw^{p-1}\ol{R}_1 \oo \Sym^2(\ol{R}_1).
 \]
This is not the case since the products $z\cdot z_{\ul{w}} \in \Sym^2(\ol{R}_1)$ are linearly independent.
\end{proof}

\section{Standard basis and the straightening algorithm}\label{sec:straightening}

Keeping the notation from the previous section, our goal is to construct a natural basis for the graded Artinian ring $\R$, which we will call the \defi{standard basis}, and to give an algorithm for expressing every element of $\R$ as a linear combination of standard basis elements. We will refer to this algorithm as the \defi{straightening algorithm}. 

\subsection{The standard basis}

To construct an explicit basis for $\ol{R}$, we introduce the collection of \defi{increasing lattice paths} from $\ul{0}$ to $\ul{a}=(a_1,\cdots,a_n)$, defined by
\[ 
\mc{L}(\ul{a}) = \left\{ \gamma \colon \{0,1,\cdots,|\ul{a}|\} \lra \bb{Z}^n \left|
\begin{array}{l}
\gamma(0) = \ul{0},  \\
\gamma(|\ul{a}|) = \ul{a},  \\
|\gamma(t)| = t\mbox{ for all }t=0,\cdots,|\ul{a}|,  \\
\gamma(t) \leq \gamma(t+1)\mbox{ for all }t=0,\cdots,|\ul{a}|-1.
\end{array}
\right.\right\}. 
\] 

The data of a path $\gamma\in\mc{L}(\ul{a})$ is equivalent to that of a sequence $s(\gamma)_{1\leq t\leq |\ul{a}|}$ where $s(\gamma)_t$ records which coordinate jumps by one at step $t$: $s(\gamma)_t$ is equal to the unique index $i$ for which $\gamma(t)_i > \gamma(t-1)_i$. We define the \defi{descents of a path $\gamma$} (or those of the corresponding sequence $s(\gamma)$) to be the set
\begin{equation}\label{eq:desc-gamma}
\op{Desc}(\gamma) = \{t : s(\gamma)_t > s(\gamma)_{t+1}\} \subset \{1,\cdots,|\ul{a}|-1\}.
\end{equation}
For each $\gamma\in\mc{L}(\ul{a})$ we define
\begin{equation}\label{eq:m-gamma}
 m_{\gamma} = \prod_{t\in\rm{Desc}(\gamma)} z_{\gamma(t)} \in \ol{R}.
\end{equation}

\begin{example} If $\ul{a}=(1,1,1)$ and $\gamma(0) = \ul{0}$, $\gamma(1)=(0,0,1)$, $\gamma(2)=(0,1,1)$, $\gamma(3) = (1,1,1)$ then $s(\gamma) = (3,2,1)$, $\op{Desc}(\gamma) = \{1,2\}$ and
\[m_{\gamma} = z_{0,0,1} \cdot z_{0,1,1}.\]
On the other hand, if $\gamma(0) = \ul{0}$, $\gamma(1)=(0,0,1)$, $\gamma(2)=(1,0,1)$, $\gamma(3) = (1,1,1)$ then $s(\gamma) = (3,1,2)$, $\op{Desc}(\gamma) = \{1\}$ and
\[
m_{\gamma} = z_{0,0,1}. \qedhere
\]
\end{example}

\begin{theorem}\label{thm:basis-Rbar}
 The set 
 \begin{equation}\label{eq:standard-basis}
 \mc{B}(\ul{a}) = \{m_{\gamma}:\gamma\in\mc{L}(\ul{a})\}
 \end{equation}
 is a basis of $\ol{R}$, called the \defi{standard basis}.
\end{theorem}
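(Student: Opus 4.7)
The plan is to combine a cardinality calculation with a spanning argument based on the straightening algorithm to be developed in Section~\ref{sec:straightening}.

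First, I would establish that $|\mc{B}(\ul{a})| = \dim_{\bb{C}}(\ol{R})$. The set $\mc{L}(\ul{a})$ is in bijection with the words of length $|\ul{a}|$ in the alphabet $\{1,\ldots,n\}$ containing each letter $i$ exactly $a_i$ times (send $\gamma$ to its step sequence $s(\gamma)$), so $|\mc{L}(\ul{a})| = \binom{|\ul{a}|}{a_1,\ldots,a_n}$, which agrees with $\dim_{\bb{C}}(\ol{R})$ by (\ref{eq:deg-segre}). Next, I would check that $\gamma \mapsto m_\gamma$ is injective. The monomial $m_\gamma$ records the points $\{\gamma(t) : t \in \op{Desc}(\gamma)\}$, and since $|\gamma(t)| = t$ these positions and values are readable off $m_\gamma$. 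Between consecutive descent positions (and before the first / after the last descent) the defining condition for $\op{Desc}(\gamma)$ forces the step sequence $s(\gamma)$ to be non-decreasing on that interval, and since a non-decreasing sequence with prescribed endpoints in $\mc{P}(\ul{a})$ is unique, the rest of the path is determined. Hence $|\mc{B}(\ul{a})| = |\mc{L}(\ul{a})| = \dim_{\bb{C}}(\ol{R})$.

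Given this equality, it suffices to prove that $\mc{B}(\ul{a})$ spans $\ol{R}$. For this I would appeal to a straightening algorithm developed in the following subsection: starting from an arbitrary monomial in $\ol{R}$, apply (\ref{eq:SL}) to reduce to a product $z_{\ul{j}_1}^{e_1}\cdots z_{\ul{j}_d}^{e_d}$ indexed by a chain $\ul{j}_1 \leq \cdots \leq \ul{j}_d$ in $\mc{P}(\ul{a})$ (this is the usual ASL normal form for $R$), and then use the linear relations $x_r = 0$ that hold in $\ol{R}$ to rewrite each chain product as a $\bb{C}$-linear combination of $m_\gamma$'s. The combination of cardinality matching with spanning immediately yields the basis property.

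The main obstacle is the correctness and termination of this reduction, namely showing that the relations $x_r = 0$ can be applied so as to always land inside $\mc{B}(\ul{a})$ rather than in the much larger set of all strict chain products of $z_{\ul{j}}$'s. This requires introducing a suitable total order on monomials adapted to the descent combinatorics of lattice paths and verifying that each rewriting step strictly decreases this order. Once the algorithm is set up so that its output is supported on $\mc{B}(\ul{a})$, the theorem follows.
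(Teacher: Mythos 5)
Your overall strategy matches the paper's: count $|\mc{L}(\ul{a})| = \binom{|\ul{a}|}{a_1,\ldots,a_n} = \dim_{\bb{C}}(\ol{R})$ using \eqref{eq:deg-segre}, then prove spanning via a straightening process and conclude by dimension count. Two small remarks on the first half. Your injectivity check for $\gamma \mapsto m_\gamma$ is correct (the descent points are read off from $m_\gamma$, and a path with no descents between two prescribed poset elements is unique --- this is exactly the $\gamma_{\ul{v}}$ of \eqref{eq:def-gamma-v}), but it is not logically needed: a family of $\dim \ol{R}$ elements that spans is automatically a basis, so distinctness comes for free. The paper relies on this shortcut and never mentions injectivity.

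The genuine gap is in the spanning half, which is where all the content lies, and you explicitly defer it: ``The main obstacle is \dots showing that the relations $x_r=0$ can be applied so as to always land inside $\mc{B}(\ul{a})$ rather than in the much larger set of all strict chain products \dots This requires introducing a suitable total order \dots Once the algorithm is set up \dots the theorem follows.'' That ``obstacle'' is precisely the proof, and naming it is not the same as resolving it. Concretely, what is missing is (i) the choice of a well-founded order that decreases under \emph{both} kinds of rewriting --- the ASL relation \eqref{eq:SL} and the linear substitution \eqref{eq:re-express-zv}; the paper uses the lexicographic rank $\lexrk$ and proves the needed drop in Lemma~\ref{lem:uv-to-ww'}; (ii) the identification of which strict chain products are irreducible: it is not enough to reach a product $z_{\ul{u}^1}\cdots z_{\ul{u}^r}$ with $\ul{u}^1 < \cdots < \ul{u}^r$ as in Corollary~\ref{cor:incr-ui}; one must further impose the descent conditions $f(\ul{u}^i,\ul{u}^{i+1}) < l(\ul{u}^{i-1},\ul{u}^i)$ (with the conventions $\ul{u}^0=\ul{0}$, $\ul{u}^{r+1}=\ul{a}$), which is the content of Lemmas~\ref{lem:3factor} and~\ref{lem:2factor}; and (iii) the verification that a strict chain product satisfying those conditions really equals some $m_\gamma$, which the paper does by concatenating descent-free segments via \eqref{eq:def-gamma-v}--\eqref{eq:def-gamma}. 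Your sketch does not discover the $f<l$ criterion, which is the structural insight that tells you when to stop rewriting and why the stable set is exactly $\mc{B}(\ul{a})$ and not the larger set of all strict chain products. Without (i)--(iii) the spanning claim, and hence the theorem, is not established.
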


\begin{remark}
 Note that the maximal degree of a monomial $m_{\gamma}$ for $\gamma\in\mc{L}(\ul{a})$ is $a_2+\cdots+a_n$ (there exist $a_1$ values of $t$ for which $s(\gamma)_t = 1$, and none of these values belongs to $\op{Desc}(\gamma)$). This is consistent with the fact that $a_2+\cdots+a_n$ is the Castelnuovo--Mumford regularity of $R$ as an $S$-module (or equivalently that of $\ol{R}$ as an $\ol{S}$-module).
\end{remark}

In order to prove Theorem~\ref{thm:basis-Rbar} we fix some notation and establish a series of preliminary results. Given a monomial
\begin{equation}\label{eq:some-m}
m = z_{\ul{v}^1} \cdots z_{\ul{v}^r}\mbox{ where }|\ul{v}^1| \leq \cdots \leq |\ul{v}^r|\mbox{ and }\ul{v}^i\in\mc{P}(\ul{a})
\end{equation}
we define the \defi{lexicographic rank} of $m$, denoted $\lexrk(m)$, to be 
\[\lexrk(m) = (|\ul{v}^1|,\cdots,|\ul{v}^r|) \in \bb{Z}^r_{\geq 0}.\]
We consider the lexicographic order on $\bb{Z}^r_{\geq 0}$ given by
\[
(d_1,\cdots,d_r) \prec (e_1,\cdots,e_r)\mbox{ if there exists an index }i \le r\mbox{ with }d_j = e_j,\mbox{ for }j < i,\mbox{ and }d_{i}<e_{i},
\]
and the induced order on monomials: $m\prec m'$ if $\lexrk(m)\prec\lexrk(m')$. This is not to be confused with the partial order defining the poset $\mc{P}(\ul{a})$ in (\ref{eq:def-Pa}), which we write as $<$. An important fact which we will use throughout is that the lexicographic order is compatible with multiplication: if $m\prec m'$ are monomials of degree $r$, and if $m''$ is any other monomial then $m\cdot m'' \prec m'\cdot m''$.

We define the \defi{rank} of a monomial $m$ as in (\ref{eq:some-m}) to be
\[\rk(m) = |\lexrk(m)| = |\ul{v}^1| + \cdots + |\ul{v}^r| \in \bb{Z}_{\geq 0}.\]
Notice that the notion of rank induces a $\bb{Z}$-grading on $S$, and that the relations (\ref{eq:SL}) and (\ref{eq:rels-z}) that are used to define $\R$ as a quotient of $S$ are homogeneous with respect to this grading. It follows that $\R$ inherits a $\bb{Z}^2$-grading given by degree and rank, respectively.

\begin{lemma}\label{lem:uv-to-ww'}
 If $\ul{u},\ul{v}\in\mc{P}(\ul{a})$ and $|\ul{u}| = |\ul{v}|$ then we have an expression in $\ol{R}$
\[
z_{\ul{u}} \cdot z_{\ul{v}} = \sum c_{\ul{w},\ul{w}'}\cdot z_{\ul{w}} \cdot z_{\ul{w}'}
\]
where $c_{\ul{w},\ul{w}'} \in \bb{Z}$ and each term in the sum has $|\ul{w}| < |\ul{u}|$. In other words, $z_{\ul{u}} \cdot z_{\ul{v}}$ can be expressed as a linear combination of monomials of smaller lexicographic rank.
\end{lemma}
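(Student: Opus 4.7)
The plan is to reduce everything to the straightening law \eqref{eq:SL} together with the linear relations \eqref{eq:rels-z} that define $\ol{R}_1$. Writing $k = |\ul{u}| = |\ul{v}|$, I would split into the cases $\ul{u}\neq\ul{v}$ and $\ul{u}=\ul{v}$.

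\textbf{Case 1: $\ul{u}\neq\ul{v}$.} Here straightening does all the work. Set $\ul{w}=\min(\ul{u},\ul{v})$ and $\ul{w}'=\max(\ul{u},\ul{v})$. Since $\ul{u}\neq\ul{v}$ but $|\ul{u}|=|\ul{v}|$, there must be indices where $\ul{u}$ strictly exceeds $\ul{v}$ and vice versa; in particular $\ul{w}\neq\ul{w}'$. The key numerical observation is
\[
|\ul{w}|+|\ul{w}'| \;=\; \sum_i\bigl(\min(u_i,v_i)+\max(u_i,v_i)\bigr) \;=\; |\ul{u}|+|\ul{v}| \;=\; 2k,
\]
combined with $|\ul{w}|\leq|\ul{w}'|$ coordinatewise, forcing $|\ul{w}|\leq k$. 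Equality would require $|\ul{w}|=|\ul{w}'|$, hence $\min(u_i,v_i)=\max(u_i,v_i)$ for all $i$, contradicting $\ul{u}\neq\ul{v}$; therefore $|\ul{w}|<k=|\ul{u}|$. By \eqref{eq:SL} we have $z_{\ul u}\cdot z_{\ul v}=z_{\ul w}\cdot z_{\ul w'}$, which is the desired reduction.

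\textbf{Case 2: $\ul{u}=\ul{v}$.} The straightening law alone is useless, so I would invoke the linear relation \eqref{eq:rels-z} in $\ol{R}_1$ to write
\[
z_{\ul u} \;=\; -\sum_{\substack{\ul v\in\mc{P}(\ul a)\\|\ul v|=k,\ \ul v\neq\ul u}} z_{\ul v} \quad\text{in }\ol R_1,
\]
and multiply both sides by $z_{\ul u}$ to obtain
\[
z_{\ul u}^{\,2} \;=\; -\sum_{\substack{\ul v\in\mc{P}(\ul a)\\|\ul v|=k,\ \ul v\neq\ul u}} z_{\ul u}\cdot z_{\ul v}.
\]
Every term on the right-hand side now falls under Case 1 and therefore equals a product $z_{\ul w}\cdot z_{\ul w'}$ with $|\ul w|<k$. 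Collecting, $z_{\ul u}^{\,2}$ itself becomes an integer linear combination of such products, completing the reduction.

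\textbf{Expected difficulty.} There is essentially no deep obstacle here: Case 1 is immediate from the straightening law once the elementary inequality $|\min(\ul u,\ul v)|<|\ul u|$ is observed, and Case 2 is a one-line trick using the defining relations of $\ol R_1$ to bootstrap Case 2 to Case 1. The only thing worth stating carefully is the final clause about lexicographic rank: a product $z_{\ul w}\cdot z_{\ul w'}$ with $|\ul w|<|\ul u|=|\ul v|$ has $\lexrk$ strictly less than $(|\ul u|,|\ul v|)$ in the lex order on $\bb{Z}^2_{\geq 0}$, so the asserted decrease in lexicographic rank is automatic.
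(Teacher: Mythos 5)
Your proof is correct and follows essentially the same route as the paper's: split according to whether $\ul{u}=\ul{v}$ (equivalently, given $|\ul{u}|=|\ul{v}|$, whether $\ul{u}\leq\ul{v}$), handle the distinct case by the straightening law \eqref{eq:SL}, and bootstrap the equal case via the linear relation \eqref{eq:rels-z}. You are a bit more explicit than the paper in verifying $|\min(\ul u,\ul v)|<|\ul u|$, which is a useful touch but not a different argument.
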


\begin{proof} If $\ul{u} \not\leq \ul{v}$ then we use (\ref{eq:SL}) to get
\[ z_{\ul{u}} \cdot z_{\ul{v}} = z_{\min(\ul{u},\ul{v})}\cdot z_{\max(\ul{u},\ul{v})},\]
so the conclusion follows by taking $w=\min(\ul{u},\ul{v})$ and $w'=\max(\ul{u},\ul{v})$ and $c_{\ul{w},\ul{w}'}=1$. We may thus assume that $\ul{u} \leq \ul{v}$, which together with the fact that $|\ul{u}| = |\ul{v}|$ implies $\ul{u} = \ul{v}$. Using (\ref{eq:rels-z}) we can write
\begin{equation}\label{eq:re-express-zv}
z_{\ul{v}} = - \sum_{\substack{\ul{v} \neq \ul{v}' \in \mc{P}(\ul{a}) \\ |\ul{v}'| = |\ul{v}| }} z_{\ul{v}'},
\end{equation}
and the conclusion follows from the previous case by observing that all the terms in the above sum satisfy $|\ul{v}'| = |\ul{u}|$ and $\ul{v}'\neq\ul{u}$.
\end{proof}

\begin{corollary}\label{cor:incr-ui}
 The degree $r$ homogeneous component $\ol{R}_r$ of $\ol{R}$ is spanned by monomials
 \begin{equation}\label{eq:incr-ui}
 z_{\ul{u}^1} \cdots z_{\ul{u}^r}\mbox{ with }\ul{u}^i\in\mc{P}(\ul{a})\mbox{ and }\ul{u}^1 < \cdots < \ul{u}^r.
 \end{equation}
\end{corollary}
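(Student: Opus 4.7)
The plan is to argue by induction on the lexicographic rank of a monomial, noting that for a degree $r$ monomial the possible values of $\lexrk$ lie in the bounded set $\{0,1,\ldots,|\ul{a}|\}^r$ where $\prec$ is a well-ordering. Starting from a spanning monomial $m = z_{\ul{v}^1}\cdots z_{\ul{v}^r}$ ordered so that $|\ul{v}^1|\leq\cdots\leq|\ul{v}^r|$, I would attempt to rewrite $m$ either as a linear combination of monomials of strictly smaller lex rank---in which case the inductive hypothesis finishes the argument---or directly as a product in the form (\ref{eq:incr-ui}).

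Two reduction moves are already in hand. First, if some adjacent pair satisfies $|\ul{v}^i|=|\ul{v}^{i+1}|$, then Lemma~\ref{lem:uv-to-ww'} rewrites $z_{\ul{v}^i}\cdot z_{\ul{v}^{i+1}}$ as a combination of degree-$2$ monomials of strictly smaller lex rank; multiplying back by the remaining $z_{\ul{v}^j}$, the compatibility of $\prec$ with multiplication (recorded in the paragraph before Lemma~\ref{lem:uv-to-ww'}) propagates this strict decrease up to $m$. Second, once all $|\ul{v}^i|$ are strictly increasing but some adjacent pair $\ul{v}^i,\ul{v}^{i+1}$ is incomparable in $\mc{P}(\ul{a})$, I would apply the straightening law (\ref{eq:SL}) to replace $z_{\ul{v}^i}z_{\ul{v}^{i+1}}$ by $z_{\min(\ul{v}^i,\ul{v}^{i+1})}\cdot z_{\max(\ul{v}^i,\ul{v}^{i+1})}$. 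Incomparability forces $|\min(\ul{v}^i,\ul{v}^{i+1})|<|\ul{v}^i|$ (by picking a coordinate where $\ul{v}^i$ exceeds $\ul{v}^{i+1}$), and consequently $|\max(\ul{v}^i,\ul{v}^{i+1})|>|\ul{v}^{i+1}|$, so after re-sorting the $r$ factors by total degree the new lex rank strictly decreases. When neither reduction applies---the total degrees are strictly increasing and every adjacent pair is comparable---the strict inequality $|\ul{v}^i|<|\ul{v}^{i+1}|$ combined with comparability forces $\ul{v}^i<\ul{v}^{i+1}$ in $\mc{P}(\ul{a})$, and transitivity produces the strict chain $\ul{v}^1<\cdots<\ul{v}^r$ required for (\ref{eq:incr-ui}).

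The one step that deserves genuine care is the claim that the straightening move strictly decreases the lex rank after re-sorting. The key observation is that the move leaves the factors at positions $1,\ldots,i-1$ untouched and introduces an entry $|\min|$ that is strictly smaller than $|\ul{v}^i|$; after sorting, this small entry occupies some position $j\leq i$, and comparing the old and new sorted tuples coordinate by coordinate shows they agree in positions $1,\ldots,j-1$ and that the new tuple has a strictly smaller entry at position $j$. Once this bookkeeping is dispatched the induction closes immediately, and the remaining content of the corollary is a direct consequence of the two lemmas invoked.
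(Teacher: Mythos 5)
Your proof is correct and matches the paper's strategy: induct on (equivalently, take a minimal counterexample with respect to) the lexicographic rank, reduce via Lemma~\ref{lem:uv-to-ww'} when adjacent factors share the same rank and via the straightening law (\ref{eq:SL}) when they are incomparable, and note that a monomial admitting neither reduction already satisfies $\ul{v}^1<\cdots<\ul{v}^r$. The explicit bookkeeping you carry out for the strict $\prec$-decrease in Move 2 is sound, though it is also an immediate consequence of the stated compatibility of $\prec$ with multiplication, just as you invoked for Move 1.
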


\begin{proof}
 The vector space $\ol{R}_r$ is spanned by monomials $z_{\ul{v}^1} \cdots z_{\ul{v}^r}$ where $|\ul{v}^1| \leq \cdots \leq |\ul{v}^r|$ and $\ul{v}^i\in\mc{P}(\ul{a})$. We assume that the conclusion of the corollary is false, and consider a minimal monomial $m=z_{\ul{v}^1} \cdots z_{\ul{v}^r}$ which is not expressible as a linear combination of monomials as in (\ref{eq:incr-ui}). We let $1\leq i< r$ be the smallest index for which $\ul{v}^i \nless \ul{v}^{i+1}$. If $\ul{v}^i = \ul{v}^{i+1}$ then we apply Lemma~\ref{lem:uv-to-ww'}, while if $\ul{v}^i \neq \ul{v}^{i+1}$ we apply the relation (\ref{eq:SL}) to re-express $m$ as a linear combination of monomials $m_j$ of smaller lexicographic rank. By the minimality of $m$, all the monomials $m_j$ are expressible as linear combinations of monomials as in (\ref{eq:incr-ui}). This means that the same must be true about $m$, contradicting our assumption.
\end{proof}

For any $\ul{u}<\ul{v}\in \mc{P}(\ul{a})$ we let
\begin{align*}
f(\ul{u},\ul{v}) &= \min\{ i : u_i < v_i\},\mbox{ the first index where }\ul{u}\mbox{ and }\ul{v}\mbox{ differ, and} \\
l(\ul{u},\ul{v}) &= \max\{ i : u_i < v_i\},\mbox{ the last index where }\ul{u}\mbox{ and }\ul{v}\mbox{ differ}.
\end{align*}

\begin{lemma}\label{lem:3factor}
 Every degree three monomial $z_{\ul{u}} z_{\ul{v}} z_{\ul{w}} \in \ol{R}_3$ can be expressed as a linear combination
  \begin{equation}\label{eq:3factor}
  z_{\ul{u}} z_{\ul{v}} z_{\ul{w}} = \sum c_{\ul{u}',\ul{v}',\ul{w}'}\cdot z_{\ul{u}'} z_{\ul{v}'} z_{\ul{w}'}
  \end{equation}
where $c_{\ul{u}', \ul{v}', \ul{w}'} \in \bb{Z}$, and the sum ranges over triples $(\ul{u}'<\ul{v}'<\ul{w}')$ with $f(\ul{v}',\ul{w}') < l(\ul{u}',\ul{v}')$ and $(|\ul{u}'|,|\ul{v}'|,|\ul{w}'|)\preceq(|\ul{u}|,|\ul{v}|,|\ul{w}|)$.
\end{lemma}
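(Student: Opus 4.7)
The plan is to proceed by induction on the lexicographic rank of the monomial $z_{\ul{u}} z_{\ul{v}} z_{\ul{w}}$. First, using Corollary~\ref{cor:incr-ui}, I would rewrite $z_{\ul{u}} z_{\ul{v}} z_{\ul{w}}$ as a $\bb{Z}$-linear combination of strict chain monomials $z_{\ul{u}'} z_{\ul{v}'} z_{\ul{w}'}$ (with $\ul{u}' < \ul{v}' < \ul{w}'$) of lex rank $\preceq (|\ul{u}|,|\ul{v}|,|\ul{w}|)$; when the original is not already a chain, the replacing monomials have strictly smaller lex rank and are handled by the induction hypothesis. Hence I may assume $\ul{u} < \ul{v} < \ul{w}$. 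If $f(\ul{v},\ul{w}) < l(\ul{u},\ul{v})$ there is nothing to prove; otherwise $l(\ul{u},\ul{v}) \leq f(\ul{v},\ul{w})$, and I would multiply the identity $\sum_{|\ul{v}'|=|\ul{v}|} z_{\ul{v}'} = 0$ from (\ref{eq:rels-z}) by $z_{\ul{u}} z_{\ul{w}}$ to get
\[
z_{\ul{u}} z_{\ul{v}} z_{\ul{w}} = -\!\sum_{\substack{\ul{v}' \neq \ul{v} \\ |\ul{v}'| = |\ul{v}|}} z_{\ul{u}} z_{\ul{v}'} z_{\ul{w}}.
\]
For each $\ul{v}'$ with $\ul{u} \not\leq \ul{v}'$ or $\ul{v}' \not\leq \ul{w}$, applying (\ref{eq:SL}) to the incomparable pair strictly lowers the lex rank of the resulting monomial (by the same bookkeeping as in Corollary~\ref{cor:incr-ui}), so the induction hypothesis absorbs these terms. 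The problem thus reduces to the chain terms $\ul{v}' \neq \ul{v}$ with $\ul{u} < \ul{v}' < \ul{w}$ and $|\ul{v}'| = |\ul{v}|$, for which I must show that $f(\ul{v}',\ul{w}) < l(\ul{u},\ul{v}')$ automatically holds.

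The technical heart is the following \emph{uniqueness claim}: for fixed $\ul{u} < \ul{w}$ in $\mc{P}(\ul{a})$ and fixed rank $k$, there is at most one $\ul{v}^{*}$ with $\ul{u} < \ul{v}^{*} < \ul{w}$, $|\ul{v}^{*}| = k$, and $l(\ul{u},\ul{v}^{*}) \leq f(\ul{v}^{*},\ul{w})$. To prove it I would encode each such bad $\ul{v}^{*}$ by a threshold index $m$ satisfying $v^{*}_{i} = w_{i}$ for $i < m$, $v^{*}_{i} = u_{i}$ for $i > m$, and $v^{*}_{m} \in [u_{m}, w_{m}]$; concretely one may take $m = l(\ul{u},\ul{v}^{*})$, the inequality $l \leq f$ being precisely what ensures that the constraints ``$i > l$'' and ``$i < f$'' together pin down every coordinate $\neq m$. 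Given two bad $\ul{v}^{1} \neq \ul{v}^{2}$ with thresholds $m_{1} \leq m_{2}$, I may assume $m_{1} < m_{2}$ (the case $m_{1} = m_{2}$ is immediate from $|\ul{v}^{1}| = |\ul{v}^{2}|$). Expanding $0 = |\ul{v}^{2}| - |\ul{v}^{1}|$ coordinate by coordinate then yields
\[
0 = (w_{m_{1}} - v^{1}_{m_{1}}) + \!\sum_{m_{1} < i < m_{2}}\!(w_{i} - u_{i}) + (v^{2}_{m_{2}} - u_{m_{2}}),
\]
a sum of non-negative terms that must therefore each vanish; a coordinate-wise comparison then forces $\ul{v}^{1} = \ul{v}^{2}$, a contradiction.

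With the uniqueness claim in hand, the unique bad element of the relevant chain slice is the original $\ul{v}$, so every remaining chain term in the sum is good. Combined with the strictly-smaller-lex-rank contributions (handled by induction), this yields the desired expression for $z_{\ul{u}} z_{\ul{v}} z_{\ul{w}}$ as a $\bb{Z}$-linear combination of triples satisfying $\ul{u}' < \ul{v}' < \ul{w}'$, $f(\ul{v}',\ul{w}') < l(\ul{u}',\ul{v}')$, and $(|\ul{u}'|,|\ul{v}'|,|\ul{w}'|) \preceq (|\ul{u}|,|\ul{v}|,|\ul{w}|)$. The main obstacle I anticipate is precisely this uniqueness claim; the remainder of the argument is a disciplined application of Corollary~\ref{cor:incr-ui} together with the straightening identities (\ref{eq:SL}) and (\ref{eq:rels-z}) inside the outer induction on lex rank.
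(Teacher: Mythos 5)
Your argument is correct and shares the overall skeleton of the paper's proof: induction on lexicographic rank, reduction to strict chains via Corollary~\ref{cor:incr-ui}, substitution of $z_{\ul{v}}$ via (\ref{eq:rels-z}), absorption of the non-chain terms $\ul{v}'$ (those with $\ul{u}\not\le\ul{v}'$ or $\ul{v}'\not\le\ul{w}$) by the straightening law and the lex-rank induction, and finally a verification that the remaining chain terms are ``good.'' Where you genuinely diverge is in that last step. The paper argues pointwise: given a chain term $\ul{v}'\ne\ul{v}$ with $|\ul{v}'|=|\ul{v}|$, it picks coordinates $i$ with $v_i>v'_i$ and $j$ with $v_j<v'_j$ and reads off $f(\ul{v}',\ul{w})\le i\le l(\ul{u},\ul{v})\le f(\ul{v},\ul{w})\le j\le l(\ul{u},\ul{v}')$ together with $i\ne j$, yielding $f(\ul{v}',\ul{w})<l(\ul{u},\ul{v}')$ directly. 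You instead prove a stronger structural statement, a uniqueness claim: in the slice $\{\ul{v}^*: \ul{u}<\ul{v}^*<\ul{w},\ |\ul{v}^*|=k\}$ there is at most one element satisfying $l(\ul{u},\ul{v}^*)\le f(\ul{v}^*,\ul{w})$. Your threshold encoding (taking $m=l(\ul{u},\ul{v}^*)$ so that $v^*_i=w_i$ for $i<m$ and $v^*_i=u_i$ for $i>m$, using $l\le f$) is a clean characterization of bad vertices, and the telescoping identity $0=(w_{m_1}-v^1_{m_1})+\sum_{m_1<i<m_2}(w_i-u_i)+(v^2_{m_2}-u_{m_2})$ is correct. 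One small imprecision: the contradiction is already immediate from the last term being strictly positive, since $m_2=l(\ul{u},\ul{v}^2)$ forces $v^2_{m_2}>u_{m_2}$; you do not need the subsequent ``coordinate-wise comparison forces $\ul{v}^1=\ul{v}^2$.'' Both approaches are valid and of comparable length; yours buys a slightly more reusable intermediate statement (a rigidity property of the poset $\mc{P}(\ul{a})$), while the paper's is a more direct single-pass check.
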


\begin{proof}
 Arguing as in the proof of Corollary~\ref{cor:incr-ui} we choose a monomial $m = z_{\ul{u}} z_{\ul{v}} z_{\ul{w}}$ of smallest lexicographic rank for which the conclusion of the lemma fails (assuming there exists one). Corollary~\ref{cor:incr-ui} (and its proof) implies that we may assume $\ul{u} < \ul{v} < \ul{w}$. Since $m$ can't be expressed as in (\ref{eq:3factor}), we must have
 \[ f(\ul{v},\ul{w}) = f \geq l = l(\ul{u},\ul{v}).\]
We rewrite $z_{\ul{v}}$ using (\ref{eq:re-express-zv}) and consider any $\ul{v}\neq\ul{v}'\in\mc{P}(\ul{a})$ with $|\ul{v}'| = |\ul{v}|$. One of the following holds:
\begin{itemize}
 \item $v_i' < u_i$ for some $i=1,\cdots,n$. Using (\ref{eq:SL}) we get
 \[ z_{\ul{u}} z_{\ul{v}'} z_{\ul{w}} = z_{\min(\ul{u},\ul{v}')} z_{\max(\ul{u},\ul{v}')} z_{\ul{w}},\]
 and since $|\min(\ul{u},\ul{v}')| < |\ul{u}|$ the right hand side has smaller lexicographic rank and hence can be expressed as in (\ref{eq:3factor}).
 \item $v_i' > w_i$ for some $i=1,\cdots,n$. We argue as in the previous case, using $z_{\ul{v}'} z_{\ul{w}} = z_{\min(\ul{v}',\ul{w})} z_{\max(\ul{v}',\ul{w})}$.
 \item $u_i\leq v_i'\leq w_i$ for all $i=1,\cdots,n$. Since $|\ul{v}| = |\ul{v}'|$ and $\ul{v}\neq\ul{v}'$ there exists $i$ such that $v_i>v'_i$. Since $v'_i \geq u_i$ we get that $i \leq l$, and since $w_i\geq v_i>v'_i$ it follows that 
 \[f(\ul{v}',\ul{w}) \leq i\leq l.\]
 By a similar argument, there exists $j$ such that $v_j<v'_j$ which yields
 \[l(\ul{u},\ul{v}') \geq j \geq f.\]
 Since $i\neq j$ and $f\geq l$ we get that $f(\ul{v}',\ul{w}) < l(\ul{u},\ul{v}')$, hence $z_{\ul{u}} z_{\ul{v}'} z_{\ul{w}}$ can be expressed as in (\ref{eq:3factor}).
\end{itemize}
The above analysis allows us to express $m$ as in (\ref{eq:3factor}), contradicting the assumption and thus concluding the proof.
\end{proof}

A similar argument shows the following.

\begin{lemma}\label{lem:2factor}
 Every degree two monomial $z_{\ul{u}} z_{\ul{v}} \in \ol{R}_2$ can be expressed as a linear combination
  \begin{equation}\label{eq:2factor}
  z_{\ul{u}} z_{\ul{v}} = \sum c_{\ul{u}',\ul{v}'}\cdot z_{\ul{u}'} z_{\ul{v}'}
  \end{equation}
where $c_{\ul{u}', \ul{v}'} \in \bb{Z}$, and the sum ranges over pairs $(\ul{u}'<\ul{v}')$ with $f(\ul{u}',\ul{v}') < l(\ul{0},\ul{u}')$, $f(\ul{v}',\ul{a}) < l(\ul{u}',\ul{v}')$, $(|\ul{u}'|,|\ul{v}'|)\preceq(|\ul{u}|,|\ul{v}|)$.
\end{lemma}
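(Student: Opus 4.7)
My plan is to adapt the argument of Lemma~\ref{lem:3factor}, viewing the two conditions (A) $f(\ul{u},\ul{v}) < l(\ul{0},\ul{u})$ and (B) $f(\ul{v},\ul{a}) < l(\ul{u},\ul{v})$ as the 3-factor condition applied respectively to the ``virtual'' triples $(\ul{0},\ul{u},\ul{v})$ and $(\ul{u},\ul{v},\ul{a})$. I choose a minimal counterexample $m = z_{\ul{u}} z_{\ul{v}}$ with respect to a two-tier well-founded order on pairs $(\ul{u},\ul{v})$: primary by lexicographic rank, and secondary (on pairs of equal lex rank) by the reverse-lexicographic order on $(\ul{u},\ul{v}) \in \mc{P}(\ul{a})\times\mc{P}(\ul{a})$, where each of $\ul{u}$ and $\ul{v}$ is compared starting from its last coordinate. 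By Corollary~\ref{cor:incr-ui} I may assume $\ul{u} < \ul{v}$; since the conclusion fails, at least one of (A), (B) fails.

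If (A) fails I rewrite $z_{\ul{u}}$ via (\ref{eq:re-express-zv}), and if (B) fails I rewrite $z_{\ul{v}}$. In each summand indexed by $\ul{u}'$ (respectively $\ul{v}'$), either an application of the straightening relation (\ref{eq:SL}) via $\min/\max$ strictly drops the lex rank---contradicting minimality---or the new pair $(\ul{u}',\ul{v})$ (respectively $(\ul{u},\ul{v}')$) lies in the same-lex-rank Case (ii)/(iii) regime with $\ul{u}' \le \ul{v}$ (resp.\ $\ul{u} \le \ul{v}'$). A direct mimicking of the calculation closing the proof of Lemma~\ref{lem:3factor}, with $\ul{0}$ or $\ul{a}$ playing the role of the fixed outer endpoint, shows that the previously failing condition now holds for the new pair.

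The main obstacle, genuinely new compared to the 3-factor setting, is that the rewriting may destroy the companion condition, so a single step does not suffice. I prevent cycling by establishing a strict-monotonicity claim for the secondary invariant: in Case (ii) under failure of (A), the new $\ul{u}'$ strictly dominates $\ul{u}$ in reverse-lexicographic order, and symmetrically in Case (iii) under failure of (B) the new $\ul{v}'$ strictly dominates $\ul{v}$. To prove the claim for $\ul{u}$, let $k$ be the largest index where $\ul{u}'$ and $\ul{u}$ differ, and suppose for contradiction $u'_k < u_k$. Then $u_k > 0$, so $k \le l(\ul{0},\ul{u})$. The equality $|\ul{u}'|=|\ul{u}|$ together with $u'_i = u_i$ for $i > k$ forces some $i < k$ with $u'_i > u_i \ge 0$; Case (ii) gives $v_i \ge u'_i > u_i$, so $i \ge f(\ul{u},\ul{v}) \ge l(\ul{0},\ul{u}) \ge k$, contradicting $i < k$. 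Hence $u'_k > u_k$ and the claim follows; the argument for $\ul{v}$ is symmetric, using $u_k \le v'_k < v_k$ in place of $u_k > 0$ to force $k \le l(\ul{u},\ul{v})$. Since the secondary reverse-lex order on $\mc{P}(\ul{a})\times\mc{P}(\ul{a})$ has no infinite ascending chains, the two-tier induction terminates and produces the desired expression of $m$ as a $\bb{Z}$-linear combination of pairs satisfying both (A) and (B).
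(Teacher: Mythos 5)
Your proof is correct and follows the intended approach: the paper declares the argument ``similar'' to Lemma~\ref{lem:3factor} and leaves it to the reader, and you indeed adapt that proof by reading conditions (A) and (B) as the three-factor constraint applied to the virtual triples $(\ul{0},\ul{u},\ul{v})$ and $(\ul{u},\ul{v},\ul{a})$. You correctly identify and resolve the one genuinely new subtlety that the paper glosses over: a single rewrite fixes the failing condition but may destroy the other one, so Lemma~\ref{lem:3factor}'s one-shot argument does not close and a termination argument is needed. Your reverse-lexicographic monotonicity claim (that within a fixed $\lexrk$, rewriting $\ul{u}$ forces $u'_k>u_k$ at the last index of disagreement $k$, and symmetrically for $\ul{v}$) is correct and I verified the inequality chains $i\ge f(\ul{u},\ul{v})\ge l(\ul{0},\ul{u})\ge k$ and $i\ge f(\ul{v},\ul{a})\ge l(\ul{u},\ul{v})\ge k$ that rule out $u'_k<u_k$ and $v'_k<v_k$. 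One small phrasing caveat: the opening ``minimal counterexample with respect to a two-tier order'' sits awkwardly with the later reverse-lex \emph{increase}---to keep the minimal-counterexample framing you should pick the counterexample with smallest $\lexrk$ and, among those, \emph{largest} reverse-lex pair, so that each rewrite lands on non-counterexamples; alternatively just drop that framing and argue, as your final sentence does, that the rewriting process terminates because $\lexrk$ never increases and reverse-lex strictly increases on any segment of constant $\lexrk$, with only finitely many pairs of each rank.
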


\begin{proof}
 Left to the reader.
\end{proof}

\begin{proof}[Proof of Theorem~\ref{thm:basis-Rbar}]
\begin{subequations}
 It is enough to prove that the monomials $m_{\gamma}$ span $\ol{R}$, since their number is equal to the vector space dimension of $\ol{R}$ by \eqref{eq:deg-segre}. Using the arguments in Corollary~\ref{cor:incr-ui}, and Lemmas~\ref{lem:3factor} and~\ref{lem:2factor}, it follows that $\ol{R}_r$ is spanned by monomials $m=z_{\ul{u}^1} \cdots z_{\ul{u}^r}$ where $\ul{u}^1 < \cdots < \ul{u}^r$ and
\begin{equation}\label{eq:order-f-l}
 f(\ul{u}^i,\ul{u}^{i+1}) < l(\ul{u}^{i-1},\ul{u}^i)\mbox{ for all }i=1,\cdots,r,
\end{equation}
where we make the convention that $\ul{u}^0 = \ul{0}$ and $\ul{u}^{r+1} = \ul{a}$. We thus fix such a monomial $m$ and let $d_i = |\ul{u}^i|$ for $i=0,\cdots,r+1$. The goal is to construct a path $\gamma\in\mc{L}(\ul{a})$ with $\gamma(d_i) = \ul{u}^i$ and $\op{Desc}(\gamma) = \{d_1,\cdots,d_r\}$, which will show that $m=m_{\gamma}$.

Observe that for every $\ul{v} \geq \ul{0}$ there exists a unique path $\gamma_{\ul{v}} \in \mc{L}(\ul{v})$ with no descents: we make the convention $v_{n+1}=1$ and for every $0\leq k\leq |\ul{v}|$ we consider the unique index $0\leq i\leq n$ with
\[v_1+\cdots+v_i \leq k < v_1+\cdots+v_{i+1}\] 
and let
\begin{equation}\label{eq:def-gamma-v}
\gamma_{\ul{v}}(k) = (v_1,\cdots,v_i,k-v_1-\cdots-v_i,0,\cdots,0).
\end{equation}
We define $\gamma\in\mc{L}(\ul{a})$ by concatenating the paths $\gamma_{\ul{u}^{i+1}-\ul{u}^i}$ in the following sense: we let $\gamma(|\ul{a}|) = \ul{a}$, while for $i=0,\cdots,r$ and $d_i\leq k<d_{i+1}$ we define
\begin{equation}\label{eq:def-gamma}
\gamma(k) = \ul{u}^i + \gamma_{\ul{u}^{i+1}-\ul{u}^i}(k-d_i).
\end{equation}
It is clear that $\gamma(d_i) = \ul{u}^i$ for all $i=0,\cdots,r+1$, and that $\op{Desc}(\gamma) \subseteq \{d_1,\cdots,d_r\}$. The fact that each of $d_1,\cdots,d_r$ appears in $\op{Desc}(\gamma)$ follows from (\ref{eq:order-f-l}), since for each $i=1,\cdots,r$ we have
\[s(\gamma)_{d_i} = l(\ul{0},\ul{u}^i-\ul{u}^{i-1}) = l(\ul{u}^{i-1},\ul{u}^i) \qquad \mbox{ and } \qquad s(\gamma)_{d_i+1} = f(\ul{0},\ul{u}^{i+1}-\ul{u}^i) = f(\ul{u}^i,\ul{u}^{i+1}).\qedhere \]
\end{subequations}
\end{proof}

\subsection{The straightening algorithm}

Based on the arguments in the proof of Theorem~\ref{thm:basis-Rbar} we can devise a recursive algorithm for expressing any monomial $m$ of the form (\ref{eq:some-m}) as a linear combination of elements of the standard basis $\mc{B}(\ul{a})$.

Consider a monomial
\[
m = z_{\ul{u}^1} \cdots z_{\ul{u}^r} \in \R_r,\mbox{ with }\ul{u}^i\in\mc{P}(\ul{a})\mbox{ and }|\ul{u}^1| \leq \cdots \leq |\ul{u}^r|,
\]
and make the convention that $\ul{u}^0 = \ul{0}$ and $\ul{u}^{r+1} = \ul{a}$. If $\ul{u}^1 =  \ul{0}$ or $\ul{u}^r = \ul{a}$ then $m=0$. Otherwise,

{\bf Step $1$:} Is there an index $i=1,\cdots,r-1$ such that $\ul{u}^i \not\leq \ul{u}^{i+1}$? If there is one, consider any such index and use (\ref{eq:SL}) to replace $m$ by $m'$ where
\[m' = z_{\ul{u}^1} \cdots z_{\ul{u}^{i-1}}\cdot z_{\min(\ul{u}^{i},\ul{u}^{i+1})}\cdot z_{\max(\ul{u}^{i},\ul{u}^{i+1})}  \cdot z_{\ul{u}^{i+2}}\cdots z_{\ul{u}^r} \]
Observe that $\lexrk(m') < \lexrk(m)$, and run the algorithm on $m'$.

If there is no such index $i$ then go to Step~2.

~

{\bf Step $2$:} Is there an index $i=1,\cdots,r$ such that $f(\ul{u}^{i},\ul{u}^{i+1}) \geq l(\ul{u}^{i-1},\ul{u}^{i})$? If there is one, consider the maximal such index $i$ and use (\ref{eq:re-express-zv}) to write
\[m = - \sum_{\substack{\ul{u}^i \neq \ul{v} \in \mc{P}(\ul{a}) \\ |\ul{v}| = |\ul{u}^i|}} m(\ul{u}^i \rightsquigarrow \ul{v}),\]
where $m(\ul{u}^i \rightsquigarrow \ul{v})$ is obtained from $m$ by replacing $\ul{u}^i$ with $\ul{v}$. Run the algorithm on each $m(\ul{u}^i \rightsquigarrow \ul{v})$.

If there is no such index $i$ then go to Step~3.

~

{\bf Step $3$:} Construct the path $\gamma\in\mc{L}(\ul{a})$ via (\ref{eq:def-gamma}) by concatenating the paths $\gamma_{\ul{u}^{i+1}-\ul{u}^i}$ defined through (\ref{eq:def-gamma-v}) and observe that $m = m_{\gamma}$.

\begin{example}\label{ex:1111}
 Suppose that $n=4$ and $\ul{a} = (1,1,1,1)$, and consider the monomial
 \[m = z_{0,0,0,1} \cdot z_{0,1,0,1} \cdot z_{1,1,0,1}.\]
 Set $\ul{u}^0 = (0,0,0,0)$, $\ul{u}^1 = (0,0,0,1)$, $\ul{u}^2 = (0,1,0,1)$, $\ul{u}^3 = (1,1,0,1)$, $\ul{u}^4 = (1,1,1,1)$.
 
 Since $(0,0,0,1) < (0,1,0,1) < (1,1,0,1)$ we can proceed directly to Step 2. We have
 \[f( (0,0,0,1) , (0,1,0,1)) = 2\mbox{ and } l( (0,0,0,0),(0,0,0,1) ) = 4,\]
 \[f( (0,1,0,1) , (1,1,0,1)) = 1\mbox{ and } l( (0,0,0,1),(0,1,0,1) ) = 2,\]
 \[f( (1,1,0,1) , (1,1,1,1)) = 3\mbox{ and } l( (0,1,0,1),(1,1,0,1) ) = 1,\]
 so $i=3$ is the only index for which $f(\ul{u}^{i},\ul{u}^{i+1}) \geq l(\ul{u}^{i-1},\ul{u}^{i})$. We write
 \[ z_{1,1,0,1} = - z_{1,1,1,0} - z_{1,0,1,1} - z_{0,1,1,1}.\]
 If $\ul{v} = (1,1,1,0)$ then $z_{0,1,0,1}\cdot z_{\ul{v}} \overset{(\ref{eq:SL})}{=} z_{0,1,0,0}\cdot z_{1,1,1,1}=0$, so $m((1,1,0,1) \rightsquigarrow \ul{v}) = 0$. By a similar reasoning, if $\ul{v} = (1,0,1,1)$ then $m((1,1,0,1) \rightsquigarrow \ul{v}) = 0$. It follows that 
 \[ m = - m((1,1,0,1) \rightsquigarrow (0,1,1,1)) = - z_{0,0,0,1} \cdot z_{0,1,0,1} \cdot z_{0,1,1,1}.\]
 We now consider 
\[ m' = z_{0,0,0,1} \cdot z_{0,1,0,1} \cdot z_{0,1,1,1},\]
and set $\ul{v}^0 = (0,0,0,0)$, $\ul{v}^1 = (0,0,0,1)$, $\ul{v}^2 = (0,1,0,1)$, $\ul{v}^3 = (0,1,1,1)$, $\ul{v}^4 = (1,1,1,1)$. We find that $i=2$ is the only index for which $f(\ul{v}^{i},\ul{v}^{i+1}) \geq l(\ul{v}^{i-1},\ul{v}^{i})$ and write
\[ z_{0,1,0,1} = - z_{1,1,0,0} - z_{1,0,1,0} - z_{1,0,0,1} - z_{0,1,1,0} - z_{0,0,1,1}.\]
If $\ul{w} = (1,1,0,0)$, $(1,0,1,0)$ or $(0,1,1,0)$ then using (\ref{eq:SL}) we get $z_{0,0,0,1}\cdot z_{\ul{w}} = 0$ and therefore $m'((0,1,0,1) \rightsquigarrow \ul{w}) = 0$. If $\ul{w} = (1,0,0,1)$ then $z_{\ul{w}} \cdot z_{0,1,1,1} = z_{0,0,0,1} \cdot z_{1,1,1,1} = 0$, so $m'((0,1,0,1) \rightsquigarrow \ul{w}) = 0$. It follows that
\[ m' = - m'((0,1,0,1) \rightsquigarrow (0,0,1,1)) = - z_{0,0,0,1} \cdot z_{0,0,1,1} \cdot z_{0,1,1,1}.\]
Putting everything together, and writing $\gamma$ for the unique path with $s(\gamma) = (4,3,2,1)$ we obtain
\[ 
m = - m' = z_{0,0,0,1} \cdot z_{0,0,1,1} \cdot z_{0,1,1,1} = m_{\gamma}. \qedhere
\]
\end{example}

\section{(Non-)vanishing of syzygies for a Segre product of copies of $\bb{P}^1$}\label{sec:non-vanishing-P1}
In this section we study the syzygies of $X=\Seg(1^n)$, the Segre product of $n$ copies of $\bb{P}^1$. In particular, we prove that the vanishing result of Theorem~\ref{thm:vanishing} is sharp. More precisely, it follows from Theorem~\ref{thm:vanishing} and Lemma~\ref{lem:Paaa} that
\[K_{p,q}(1^n)=0\rm{ for }p<2^{q+1}-2-q,\rm{ and }q=0,\cdots,n-1.\]
Using the fact that $\Seg(1^n)$ is arithmetically Gorenstein, it follows that
\[K_{p,q}(1^n)=0\rm{ for }p>2^n-2^{n-q}-q,\rm{ and }q=0,\cdots,n-1.\]
The main result of this section is

\begin{theorem}\label{thm:nonvanishing}
 With the notation as above, we have
\begin{equation}\label{eq:nonvanishing}
K_{p,q}(1^n)\neq 0\rm{ for }2^{q+1}-2-q\leq p\leq 2^n-2^{n-q}-q,\rm{ and }q=0,\cdots,n-1. 
\end{equation}
\end{theorem}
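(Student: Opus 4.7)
The plan is to construct, for each $(p,q)$ in the range of (\ref{eq:nonvanishing}), an explicit Koszul cycle in $\bw^p \R_1 \oo \R_q$ representing a nonzero class; by (\ref{eq:reduction-syz}) this gives $K_{p,q}(1^n) \neq 0$. I would first apply Gorenstein duality: $\Seg(1^n)$ is arithmetically Gorenstein with projective dimension $N := 2^n - n - 1$ over $\S$ and socle degree $n-1$, yielding $K_{p,q}(1^n) \cong K_{N-p,\,n-1-q}(1^n)^{\vee}$. A direct check shows that the involution $(p,q) \mapsto (N-p,\,n-1-q)$ sends the range of (\ref{eq:nonvanishing}) for row $q$ onto the range for row $n-1-q$, so it suffices to establish the theorem for $q \leq \lfloor (n-1)/2 \rfloor$.

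For each such $q$ I would adapt the Ein--Erman--Lazarsfeld strategy of Section~\ref{sec:non-vanishing-Kp1}: exhibit an element $z \in \R_q$ and a subset $W \subseteq \mc{P}(1^n) \setminus \{\ul{0}, \ul{1}\}$ with $|W| = 2^n - 2^{n-q} - q$ such that (a) $\{z_w : w \in W\}$ is linearly independent in $\R_1$, verified via Lemma~\ref{lem:basis-R1} by ensuring $W$ omits at least one element of each weight $k \in \{1,\dots,n-1\}$, and (b) $z_w \cdot z = 0$ in $\R_{q+1}$ for every $w \in W$. Then for every $p$ in the range and every subset $W' \subseteq W$ with $|W'|=p$, the chain $c_{W'} := (\bw_{w \in W'} z_w) \oo z$ is a Koszul cycle. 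To identify $(z, W)$ I would use the standard basis from Theorem~\ref{thm:basis-Rbar}: for $z = z_{\epsilon^1}\cdots z_{\epsilon^q}$ a chain monomial with $\ul{0} < \epsilon^1 \leq \cdots \leq \epsilon^q < \ul{1}$, iterated application of (\ref{eq:SL}) gives $z_\delta \cdot z = z_{\mu^0}z_{\mu^1}\cdots z_{\mu^q}$ with $\mu^i = (\delta \wedge \epsilon^{i+1})\vee \epsilon^i$ (convention $\epsilon^0 = \ul{0}$, $\epsilon^{q+1} = \ul{1}$), which vanishes precisely when $\delta \wedge \epsilon^1 = \ul{0}$ or $\delta \vee \epsilon^q = \ul{1}$. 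For $q \in \{1,2\}$ the choice $\epsilon^1 = e_n$ and $\epsilon^q = \ul{1} - e_1$ already achieves the required $|W| = 2^n - 2^{n-q} - q$; for $3 \leq q \leq \lfloor(n-1)/2\rfloor$ a pure chain monomial only produces $|W| = 2^n - 2^{n-2} - 2$, so one must enlarge the annihilator by replacing $z$ with a carefully chosen linear combination of standard monomials, exploiting cancellations produced by (\ref{eq:rels-z}).

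The non-boundary step generalizes the end of the proof of Theorem~\ref{thm:nonvanishing-Kp1}: lift $z$ to $\tl{z} \in \Sym^q(\R_1)$ and $c_{W'}$ to $\tl{c}_{W'} = (\bw_{w \in W'} z_w)\oo\tl{z}$. If $c_{W'} = dc'$ with $c' \in \bw^{p+1}\R_1 \oo \R_{q-1}$, a standard diagram chase shows that $d\tl{c}_{W'} = \sum_{w \in W'} \pm (\bw_{w' \neq w} z_{w'}) \oo (z_w \tl{z})$ must lie in $d(\bw^p \R_1 \oo I_q)$, where $I \subset \S$ is the defining ideal of $\Seg(1^n)$. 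Using the straightening algorithm of Section~\ref{sec:straightening} to describe $I_q$ explicitly via (\ref{eq:SL}) and (\ref{eq:rels-z}), I would exhibit standard basis monomials appearing in $d\tl{c}_{W'}$ that cannot occur in this image, producing the required contradiction. The hardest step is precisely this non-boundary verification for $q \geq 2$: whereas for $q = 1$ we have $I_1 = 0$ and distinctness of monomials in $\Sym^2(\R_1)$ suffices as in Section~\ref{sec:non-vanishing-Kp1}, for $q \geq 2$ the nontrivial image $d(\bw^p \R_1 \oo I_q)$ must be controlled monomial by monomial using the straightening algorithm. This, together with the refined choice of $z$ needed for intermediate $q$, is where Section~\ref{sec:straightening} becomes indispensable and constitutes the principal obstacle.
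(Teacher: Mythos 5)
Your roadmap — Artinian reduction, Koszul cycles of the form $(\bw_{w} z_w)\oo z$, linear independence via Lemma~\ref{lem:basis-R1}, and an appeal to the standard basis of Section~\ref{sec:straightening} to verify non-boundariness — is the right one, and the paper follows it. But there is a genuine gap in the central annihilator count, and this gap pushes you toward an unnecessary and underspecified detour.

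The gap is the stated vanishing criterion for chain monomials. You claim that for $z = z_{\epsilon^1}\cdots z_{\epsilon^q}$ one has $z_\delta\cdot z = 0$ \emph{precisely} when $\delta\wedge\epsilon^1 = \ul{0}$ or $\delta\vee\epsilon^q = \ul{1}$. This is a sufficient condition but far from necessary. The Artinian reduction carries a second grading by rank (see the paragraph after Corollary~\ref{cor:incr-ui}), and Lemma~\ref{lem:small-rank} forces $\R_{d,r} = 0$ whenever $r < \binom{d+1}{2}$; this kills many more products than your criterion sees. For instance, with the paper's $m = z_{\epsilon^1}\cdots z_{\epsilon^q}$ where $\epsilon^i = (0^{n-i},1^i)$, take $\delta = \epsilon^1$. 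Then $\delta\wedge\epsilon^1 = \epsilon^1 \neq \ul{0}$ and $\delta\vee\epsilon^q = \epsilon^q \neq \ul{1}$, so your criterion predicts $z_\delta\cdot m \neq 0$; but $z_\delta\cdot m$ has degree $q+1$ and rank $1 + \binom{q+1}{2} < \binom{q+2}{2}$, so it vanishes by Lemma~\ref{lem:small-rank}. Because of this undercount you conclude that for $q\geq 3$ a pure chain monomial gives too small an annihilator, and propose replacing $z$ by a linear combination of standard monomials. This is a false alarm: the paper's Lemma~\ref{lem:annihilators} shows that for the chain monomial $m$ above the annihilator $\mc{A}$ in \eqref{eq:annihilators-zm} already has the full size $2^n - 2^{n-q} - q$ for every $q$, using exactly the rank vanishing you are missing. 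The linear-combination workaround is both unnecessary and, as described, would substantially complicate the non-boundary verification you already identify as the hard step.

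Two secondary remarks. First, your organization via Gorenstein duality to reduce to $q\leq\lfloor(n-1)/2\rfloor$ is plausible, but the paper instead fixes $q$ and inducts on $n$, covering the lower end of the $p$-range $2^{q+1}-2-q \leq p \leq 2^{n-1}-2^{n-1-q}-q$ by the specialization \eqref{eq:kpq-specialization} and using the explicit cycle only for $p > 2^{n-1}-2^{n-1-q}-q$. This matters because the explicit cycle argument (Lemma~\ref{lem:partial-nonvanishing}) requires the wedge factors to \emph{contain} the fixed set $\mc{D}$ of size $2^{q+1}-2$, so it does not directly reach $p$ as low as $2^{q+1}-2-q$; you would need to account for this. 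Second, your non-boundary step is framed as showing $d\tilde{c} \notin d(\bw^p \S_1 \oo I_q)$ — the reduction is correct in principle, but you leave entirely open how to control that image. The paper instead works intrinsically in $\R$ via the standard basis: it defines a ``divides'' relation and proves in Lemma~\ref{lem:divisors-of-m}, by running a modified straightening algorithm rank-graded degree by degree, that the only $z_{\ul{u}}\in\mc{B}_1$ dividing $m$ lie in $\mc{D}$; this is what makes the contradiction at the end of Lemma~\ref{lem:partial-nonvanishing} go through. Fleshing out your $I_q$-image computation would essentially reproduce this lemma.
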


The remaining part of this section will focus on proving Theorem~\ref{thm:nonvanishing}. Recall that $\R$ (resp. $\S$) is the quotient of $R$ (resp. $S$) by the regular sequence $x_0,x_1,\cdots,x_n$ defined in (\ref{eq:defxr}), and that by (\ref{eq:reduction-syz}) we have
 \[K_{p,q}(1^n)=K_{p,q}^{\S}(\R).\]
 It suffices then to prove the corresponding non-vanishing for the syzygies of $\R$ as an $\S$-module, for which we employ the strategy of \cite{EEL}, expanding on the arguments from Section~\ref{sec:non-vanishing-Kp1}. We prove (\ref{eq:nonvanishing}) for fixed $q$, by induction on $n=q+1,q+2,\cdots$. When $n=q+1$ we only need to consider $p=2^{q+1}-2-q$ and the conclusion follows from the fact that $\Seg(1^q)$ has projective dimension $2^{q+1}-q-1$, regularity $q$, and is arithmetically Gorenstein (see Remark~\ref{rem:a-divides-q}). When $q=1$, (\ref{eq:nonvanishing}) is a special case of Theorem~\ref{thm:nonvanishing-Kp1}. By duality we conclude that (\ref{eq:nonvanishing}) also holds for $q=n-2$. We will therefore assume from now on that $n\geq q+3$.
 
 By (\ref{eq:kpq-specialization}), the non-vanishing $K_{p,q}(1^{n-1})\neq 0$ implies that $K_{p,q}(1^n)\neq 0$, so instead of (\ref{eq:nonvanishing}) it is enough to prove that
\begin{equation}\label{eq:nonvanishing'}
K_{p,q}(1^n)\neq 0\rm{ for }2^{n-1}-2^{n-1-q}-q< p\leq 2^n-2^{n-q}-q.
\end{equation}
 Recall from (\ref{eq:standard-basis}) the definition of the standard basis $\mc{B}=\mc{B}(1^n)$ of $\R$. We write $\mc{B}_q\subset \mc{B}$ for the subset consisting of elements of degree $q$, for $q=0,\cdots,n-1$. Just as in Theorem~\ref{thm:nonvanishing-Kp1}, we will prove the non-vanishing of $K_{p,q}(1^n)$ by exhibiting a Koszul cycle which is not a boundary, and having the form
 \begin{equation}\label{eq:koscycle}
c = (z_{\ul{v}^1}\wedge z_{\ul{v}^2}\wedge\cdots\wedge z_{\ul{v}^p})\oo m \in \bw^p \S_1 \oo \R_q,
 \end{equation}
 where $\ul{v}^i\in \mc{L}(1^n)$ and $m\in \mc{B}_q$. We consider the unique path $\gamma\in\mc{L}(1^n)$ for which
 \[s(\gamma) = (n,n-1,\cdots,n-q+1,1,2,\cdots,n-q),\]
 and take
 \begin{equation}\label{eq:defm}
m = m_{\gamma} = z_{(0^{n-1},1)} \cdot z_{(0^{n-2},1^2)} \cdots z_{(0^{n-q},1^q)}.
 \end{equation}
We consider the set
\begin{equation}\label{eq:annihilators-zm}
\begin{split}
 \mc{A} &= \left(\{ \ul{v} \in \mc{L}(1^n) : v_i = 0 \mbox{ for some }i\geq n-q+1\} \cup \{(0^{n-q},1^q)\}\right)\\
&\qquad \qquad \setminus \{(0,1^i,0^{n-i-1}):i=0,\cdots,q\}.
\end{split}
\end{equation} 
We show that the elements of $\mc{Z}(\mc{A})$ annihilate $m$. To do so we start with an important observation.

Recall that $\R$ has a $\bb{Z}^2$-grading given by degree and rank. We write $\R_{d,r}$ for the span of monomials of degree $d$ and rank $r$. 

\begin{lemma}\label{lem:small-rank}
We have that
 \[\R_{d,r} = 0 \mbox{ if }r<{d+1 \choose 2}.\]
\end{lemma}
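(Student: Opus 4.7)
The plan is to exploit the standard basis $\mc{B}(\ul{a})$ built in Theorem~\ref{thm:basis-Rbar} (here specialized to $\ul{a} = 1^n$). The key observation is that each standard basis monomial is automatically bihomogeneous with respect to the degree/rank bigrading, and has rank bounded below by a triangular number in its degree.

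First I would recall that the defining relations of $\R$ as a quotient of $S$, namely the straightening law (\ref{eq:SL}) and the linear relations (\ref{eq:rels-z}) coming from $x_k = 0$, are both homogeneous in rank: the relation $z_{\ul u} z_{\ul v} = z_{\min(\ul u,\ul v)} z_{\max(\ul u, \ul v)}$ preserves total rank since $|\min(\ul u, \ul v)| + |\max(\ul u, \ul v)| = |\ul u| + |\ul v|$, and the relation $\sum_{|\ul v| = k} z_{\ul v} = 0$ is rank-homogeneous of rank $k$. Therefore $\R$ inherits the $\bb{Z}^2$-grading by degree and rank, and the discussion preceding Lemma~\ref{lem:uv-to-ww'} already notes this; in particular, every spanning set obtained through the straightening algorithm consists of bihomogeneous elements.

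Next, I would compute the bidegree of a standard basis element $m_\gamma = \prod_{t \in \Desc(\gamma)} z_{\gamma(t)}$ for $\gamma \in \mc{L}(1^n)$: by definition $|\gamma(t)| = t$, so
\[
\deg(m_\gamma) = |\Desc(\gamma)|, \qquad \rk(m_\gamma) = \sum_{t \in \Desc(\gamma)} t.
\]
Since $\Desc(\gamma) \subseteq \{1, 2, \ldots, n-1\}$ is a set of distinct positive integers, if $|\Desc(\gamma)| = d$ then the rank is minimized when $\Desc(\gamma) = \{1, 2, \ldots, d\}$, yielding $\rk(m_\gamma) \ge 1 + 2 + \cdots + d = \binom{d+1}{2}$.

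By Theorem~\ref{thm:basis-Rbar}, $\mc{B}(1^n)$ is a basis of $\R$, and by the first step it is a bihomogeneous basis. Hence $\R_{d,r}$ is spanned by those $m_\gamma$ with $\deg(m_\gamma) = d$ and $\rk(m_\gamma) = r$; by the second step there are none whenever $r < \binom{d+1}{2}$, proving $\R_{d,r} = 0$ in that range. There is no real obstacle here once the standard basis is in hand — the only thing to be careful about is verifying that the straightening relations are rank-homogeneous, so that the bigrading is well defined and the basis is bihomogeneous.
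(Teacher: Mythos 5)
Your proof is correct and takes the same approach as the paper: both observe that the bigrading by degree and rank is well defined on $\R$, that the standard basis of Theorem~\ref{thm:basis-Rbar} is bihomogeneous, and that for a standard basis element $m_\gamma$ of degree $d$ the rank $\sum_{t\in\Desc(\gamma)} t$ is a sum of $d$ distinct positive integers, hence at least $\binom{d+1}{2}$. The only difference is that you spell out the rank-homogeneity of the straightening relations, which the paper had already recorded in the paragraph preceding Lemma~\ref{lem:uv-to-ww'}.
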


\begin{proof}
 It suffices to check that a standard basis element $m_{\nu}$ of degree $d$ has rank $r\geq {d+1 \choose 2}$. To do so, note that
 \[\rk(m_{\nu}) \overset{(\ref{eq:m-gamma})}{=} \sum_{t \in \op{Desc}(\nu)} |\nu(t)| = \sum_{t \in \op{Desc}(\nu)} t \geq 1 + 2 + \cdots + d = {d+1 \choose 2}.\]
 where the inequality follows from the fact that $\op{Desc}(\nu)$ consists of $d$ distinct elements from $\{1,\cdots,n\}$.
\end{proof}

\begin{lemma}\label{lem:annihilators}
We have that $|\mc{A}| = 2^n - 2^{n-q}-q$ and
\[m \cdot z_{\ul{v}}=0\mbox{ for all }\ul{v}\in\mc{A}.\] 
\end{lemma}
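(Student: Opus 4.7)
My plan is to handle the cardinality count and the annihilation separately.

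For the cardinality, I would argue by inclusion--exclusion: the set $\{\ul{v} : v_i = 0 \text{ for some } i\geq n-q+1\}$ has $2^n - 2^{n-q}$ elements (its complement fixes the last $q$ coordinates to $1$); adjoining $(0^{n-q},1^q)$ contributes one more (it has all last $q$ coordinates equal to $1$, so was absent from the first set); and under the standing hypothesis $n\geq q+3$, each of the $q+1$ removed elements $(0,1^i,0^{n-i-1})$ lies in the first set (each has $v_n=0$), is distinct from the others (the $i$'s differ), and differs from $(0^{n-q},1^q)$. Subtracting them yields $|\mc{A}|=2^n-2^{n-q}-q$.

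For the annihilation $m\cdot z_{\ul{v}}=0$ with $\ul{v}\in\mc{A}$, I would split into three cases depending on $|\ul{v}|$ and $v_n$. If $|\ul{v}|\leq q$, then $m\cdot z_{\ul{v}}$ sits in $\R_{q+1,r}$ with rank $r=\binom{q+1}{2}+|\ul{v}|$ strictly less than $\binom{q+2}{2}$, so it vanishes by Lemma~\ref{lem:small-rank}. If $v_n=0$, then $\min(\ul{u}^1,\ul{v})=\ul{0}$, so the straightening law (\ref{eq:SL}) gives $z_{\ul{u}^1}\cdot z_{\ul{v}}=z_{\ul{0}}\cdot z_{\max(\ul{u}^1,\ul{v})}=0$ (using $z_{\ul{0}}=x_0=0$ in $\R$), and multiplying by the remaining factors of $m$ produces $m\cdot z_{\ul{v}}=0$.

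The interesting case is $v_n=1$ with $|\ul{v}|\geq q+1$. Since $\ul{v}\in\mc{A}$ and $|\ul{v}|\neq q$, we have $\ul{v}\neq(0^{n-q},1^q)$, so $\ul{v}$ must have a zero in some position $i\in\{n-q+1,\ldots,n-1\}$; consequently the trailing $1$-run of $\ul{v}$ has length $k_0$ with $1\leq k_0\leq q-1$. A direct computation gives $\min(\ul{u}^{k_0+1},\ul{v})=\ul{u}^{k_0}$, so the straightening law yields
\[z_{\ul{u}^{k_0+1}}\cdot z_{\ul{v}}=z_{\ul{u}^{k_0}}\cdot z_{\max(\ul{u}^{k_0+1},\ul{v})}.\]
Substituting this into $m\cdot z_{\ul{v}}$ rewrites the product so that
\[\left(\prod_{k=1}^{k_0-1} z_{\ul{u}^k}\right)\cdot z_{\ul{u}^{k_0}}^2\]
appears as a factor. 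This subproduct has degree $k_0+1$ and rank $\binom{k_0+1}{2}+k_0$, which is strictly less than $\binom{k_0+2}{2}$, so Lemma~\ref{lem:small-rank} forces it (and hence $m\cdot z_{\ul{v}}$) to vanish in $\R$. The main obstacle is spotting the correct pairing: using $z_{\ul{u}^{k_0+1}}\cdot z_{\ul{v}}$ rather than some other $z_{\ul{u}^k}\cdot z_{\ul{v}}$ creates a squared factor $z_{\ul{u}^{k_0}}^2$ which, together with the lower chain elements $z_{\ul{u}^1},\ldots,z_{\ul{u}^{k_0-1}}$ already present in $m$, assembles into a product whose rank lies just below the threshold of Lemma~\ref{lem:small-rank}; once this structural observation is isolated the argument collapses to the elementary rank inequality.
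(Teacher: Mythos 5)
Your proof is correct, but it takes a genuinely different route from the paper's. The paper uses a single uniform argument for all elements of $\mc{A}$ other than $(0^{n-q},1^q)$: it pairs $z_{\ul{v}}$ with the \emph{largest} factor $z_{\ul{u}^q}=z_{(0^{n-q},1^q)}$ of $m$. Since $\ul{v}$ has a zero among the last $q$ slots, $\ul{v}'=\min(\ul{u}^q,\ul{v})$ satisfies $|\ul{v}'|<q$, so the degree-$q$ product $z_{\ul{u}^1}\cdots z_{\ul{u}^{q-1}}\cdot z_{\ul{v}'}$ has rank $\binom{q}{2}+|\ul{v}'|<\binom{q+1}{2}$ and vanishes by Lemma~\ref{lem:small-rank}; the case $\ul{v}=(0^{n-q},1^q)$ is handled by the same rank bound applied to $m\cdot z_{\ul{v}}$ directly. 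Your proof instead splits into three cases and uses three different mechanisms: a direct rank bound when $|\ul{v}|\le q$, the vanishing $z_{\ul{0}}=x_0=0$ when $v_n=0$, and a pairing with a \emph{middle} factor $z_{\ul{u}^{k_0+1}}$ (chosen via the trailing $1$-run length $k_0$) that manufactures a squared factor $z_{\ul{u}^{k_0}}^2$ whose rank falls just below the threshold. All three branches check out, and the cases do cover all of $\mc{A}$, so the argument is complete. The paper's approach is more economical and requires essentially no case analysis; yours is more elaborate but has the virtue of showing that the annihilation is overdetermined, with several distinct structural reasons (vanishing of $z_{\ul{0}}$, low rank, and the squared factor) each independently forcing $m\cdot z_{\ul{v}}=0$. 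Your cardinality computation agrees with the paper's and correctly invokes $n\ge q+3$ to place the $q+1$ removed vectors inside the first set.
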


\begin{proof} The formula for $|\mc{A}|$ follows from (\ref{eq:annihilators-zm}). Suppose first that $\ul{v}\in\mc{A}$ is such that $v_i = 0$ for some $i\geq n-q+1$. Using (\ref{eq:SL}) we get
\[z_{(0^{n-q},1^q)} \cdot z_{\ul{v}} = z_{\ul{v}'}\cdot z_{\ul{v}''},\]
where $\ul{v}' = \min((0^{n-q},1^q),\ul{v})$ satisfies $|\ul{v}'| < q$. This proves that
\[
m \cdot z_{\ul{v}} = z_{(0^{n-1},1)} \cdot z_{(0^{n-2},1^2)} \cdots z_{(0^{n-q+1},1^{q-1})} \cdot z_{\ul{v}'} \cdot z_{\ul{v}''} = 0
\]
by applying Lemma~\ref{lem:small-rank} since $m' = z_{(0^{n-1},1)} \cdot z_{(0^{n-2},1^2)} \cdots z_{(0^{n-q+1},1^{q-1})} \cdot z_{\ul{v}'}$ satisfies
\[
\deg(m') = q\mbox{ and }\rk(m') = 1 + 2 + \cdots + (q-1) + |\ul{v}'| < {q+1 \choose 2}.
\]
If instead we have that $\ul{v} = (0^{n-q},1^q)$ then $m\cdot z_{\ul{v}}$ has degree $q+1$ and rank ${q+1 \choose 2} + q < {q+2 \choose 2}$, so the conclusion $m\cdot z_{\ul{v}} = 0$ follows from Lemma~\ref{lem:small-rank}.
\end{proof}

We next consider the set
\begin{equation}\label{eq:divisors-zm}
 \mc{D} = \{\ul{v} = (v_1,0^{n-q-1},v_{n-q+1},\cdots,v_n) : \ul{v} \neq \ul{0} \mbox{ and }\ul{v}\neq(1,0^{n-q-1},1^{q})\}.
\end{equation} 
 We have that
 \[\mc{D} \subset \mc{A} \mbox{ and }|\mc{D}| = 2^{q+1}-2 \leq 2^{n-1}-2^{n-1-q}-q.\]
The desired non-vanishing result now follows once we prove the following.

\begin{lemma}\label{lem:partial-nonvanishing}
 Let $2^{q+1}-2 \leq p \leq 2^n - 2^{n-q} - q$ and consider any set $\mc{V}$ with $|\mc{V}| = p$ and $\mc{D}\subseteq\mc{V}\subseteq\mc{A}$. If we choose an ordering $\ul{v}^1,\cdots,\ul{v}^p$ of the elements of $\mc{V}$ and define $c$ via (\ref{eq:koscycle}) then $c$ represents a non-zero cohomology class in $K_{p,q}^{\S}(\R)$.
\end{lemma}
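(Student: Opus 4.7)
The plan is to verify that $c$ is a Koszul cycle, then show it is not a boundary by constructing a linear functional $\phi$ on $\bw^p \ol{S}_1 \oo \ol{R}_q$ with $\phi(c)\neq 0$ that vanishes on the image of $d_1\colon \bw^{p+1}\ol{S}_1 \oo \ol{R}_{q-1} \to \bw^p \ol{S}_1 \oo \ol{R}_q$. The cycle condition follows directly from Lemma~\ref{lem:annihilators}: since $\mc{V}\subseteq\mc{A}$, we have $z_{\ul{v}^i}\cdot m = 0$ for each $i$, so applying the Koszul differential $d_2$ to $c$ yields $\sum_i \pm (\bw_{j\neq i} z_{\ul{v}^j})\oo (z_{\ul{v}^i}\cdot m) = 0$.

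For the non-boundary claim, I first extend $\mc{Z}(\mc{V})$ to a rank-homogeneous basis $B$ of $\ol{S}_1$ using Lemma~\ref{lem:basis-R1}; this is possible since $\mc{V}\subseteq\mc{A}$ ensures that for each rank $k$ there is a rank-$k$ element in $\mc{P}(1^n)\setminus\mc{V}$. Let $B'=B\setminus\mc{Z}(\mc{V})$, and set $\phi=\phi_0\oo\psi_m$, where $\phi_0$ picks off the coefficient of $\bw_{\ul{v}\in\mc{V}} z_{\ul{v}}$ in the basis of $\bw^p\ol{S}_1$ induced by $B$, and $\psi_m$ picks off the coefficient of $m$ in the standard basis $\mc{B}_q$ from Theorem~\ref{thm:basis-Rbar}. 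Then $\phi(c)=1$. Expanding $\phi$ on the image of $d_1$, the only non-zero contributions come from basis elements $T\oo r$ of $\bw^{p+1}B\oo\ol{R}_{q-1}$ with $T=\mc{Z}(\mc{V})\cup\{v\}$ for some $v\in B'$, and each such contributes $\pm\psi_m(v\cdot r)$. Therefore $\phi$ vanishes on boundaries if and only if $\psi_m(v\cdot r)=0$ for all $v\in B'$ and all $r\in\ol{R}_{q-1}$.

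The main obstacle is establishing this last identity. By rank-homogeneity and Lemma~\ref{lem:small-rank}, $v\cdot r$ has rank at least $\rk(v)+\binom{q}{2}$, so matching the rank $\binom{q+1}{2}$ of $m$ forces $\rk(v)\leq q$; thus the contributions of rank-$>q$ elements of $B'$ automatically vanish. The delicate case concerns $v\in B'$ of rank $\leq q$, which arise either from the forced rank-$k$ removals $\ul{w}^k=(0,1^k,0^{n-k-1})$ for $k\leq q$ or (when $\mc{V}\subsetneq\mc{A}$) from elements of $\mc{A}\setminus\mc{V}$. For each such $v$ I plan to run the straightening algorithm of Section~\ref{sec:straightening} on $v\cdot m_\nu$, with $\nu$ ranging over paths in $\mc{L}(1^n)$ parametrizing standard basis elements of $\ol{R}_{q-1}$, and verify combinatorially that the coefficient of $m$---equivalently, the number of occurrences of the path $\gamma$ with $s(\gamma)=(n,n-1,\ldots,n-q+1,1,2,\ldots,n-q)$ in the straightened output---is always zero. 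The exploitation will proceed by combining the rank bound with the rigid structure of $m$'s path and with Lemma~\ref{lem:annihilators} applied to partial products obtained mid-straightening. This combinatorial verification, managing the branching in Step~2 of the straightening algorithm and tracking how a descent set $\{1,\ldots,q\}$ could be produced, is the technical heart of the argument.
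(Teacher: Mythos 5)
Your setup is sound and is essentially a dual formulation of the paper's argument: where the paper expands both $c$ and a hypothetical bounding element $b$ in the bases $\mc{B}^{\wedge p}_q$, $\mc{B}^{\wedge(p+1)}_{q-1}$ and tracks coefficients, you instead build a functional $\phi = \phi_0\otimes\psi_m$ and show it kills boundaries. Both routes funnel into the same question: which linear forms ``divide'' $m$, in the sense that $\psi_m(v\cdot r)\neq 0$ for some $r$? The paper isolates this as a separate result (Lemma~\ref{lem:divisors-of-m}): if $z_{\ul{u}}\in\mc{B}_1$ divides $m$, then $\ul{u}\in\mc{D}$. Its proof is nontrivial --- it uses the rank grading, Lemma~\ref{lem:small-rank}, and a modified Step~$2'$ of the straightening algorithm --- and is the genuine technical content of this part of the paper. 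Your proposal recognizes that this is ``the technical heart of the argument'' and then defers it to an unwritten ``combinatorial verification.'' That step is the proof; as written the proposal has a gap there.

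There is also a concrete technical problem in the reduction itself, arising from your choice of basis. When you extend $\mc{Z}(\mc{V})$ to $B=\mc{Z}(\mc{V}')$ by removing $\ul{w}^k=(0,1^k,0^{n-k-1})$ for $k\leq q$, the set $B'=\mc{Z}(\mc{V}'\setminus\mc{V})$ can contain elements $z_{(1^k,0^{n-k})}$ for $2\leq k\leq q$: indeed $(1^k,0^{n-k})\in\mc{A}$, and for $k\geq 2$ it lies outside $\mc{D}$, so it need not be in $\mc{V}$ (e.g.\ when $\mc{V}=\mc{D}$). But $z_{(1^k,0^{n-k})}$ is \emph{not} a standard basis element of $\R_1$ (it lies outside $\mc{B}_1$ and satisfies $z_{(1^k,0^{n-k})}=-\sum_{|\ul{v}|=k,\ \ul{v}\neq(1^k,0^{n-k})}z_{\ul{v}}$), so the fact that $\mc{D}\subseteq\mc{V}$ together with Lemma~\ref{lem:divisors-of-m} does not immediately imply it fails to divide $m$ --- its expansion in $\mc{B}_1$ contains $\mc{D}$-terms, which can contribute. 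The paper sidesteps this entirely by never reformulating in terms of a custom extension of $\mc{Z}(\mc{V})$: it stays in $\mc{B}_1$ throughout, so every ``divisor candidate'' $z_{p+1}$ is automatically a standard basis monomial, and Lemma~\ref{lem:divisors-of-m} applies directly. If you insist on the functional formulation, you can repair the reduction by choosing the removed rank-$k$ element to be $(1^k,0^{n-k})$ whenever $(1^k,0^{n-k})\notin\mc{V}$ (forcing $\ul{w}^k\in\mc{V}'$ instead; note $\ul{w}^k\notin\mc{A}\supseteq\mc{D}$, so $z_{\ul{w}^k}$ is then handled by Lemma~\ref{lem:divisors-of-m}); this guarantees $B'\subseteq\mc{B}_1\setminus\mc{Z}(\mc{D})$, at which point you have reduced exactly to the paper's Lemma~\ref{lem:divisors-of-m} --- which still must be proved.
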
 
 
Indeed, the assumption that $n\geq q+3$ guarantees that $2^{q+1}-2 \leq 2^{n-1}-2^{n-1-q}-q$. Lemma~\ref{lem:partial-nonvanishing} then implies that $K_{p,q}^{\S}(\R) \neq 0$ for $2^{n-1}-2^{n-1-q}-q \leq p \leq 2^n - 2^{n-q}-q$ and since we know by induction that $K_{p,q}^{\S}(\R) \neq 0$ for $2^{q+1}-2-q\leq p\leq 2^{n-1}-2^{n-1-q}-q$, the non-vanishing (\ref{eq:nonvanishing}) follows.
 
\begin{proof}[Proof of Lemma~\ref{lem:partial-nonvanishing}]
 It follows from Lemma~\ref{lem:annihilators} that $m\cdot z_{\ul{v}} = 0$ for all $\ul{v}\in\mc{V}$, so $c$ is a Koszul cycle, hence we only need to verify that $c$ is not a boundary. We first introduce some more notation. Consider $x\in\R$ and $m_{\nu} \in \mc{B}$: we say that  \defi{$x$ divides $m_{\nu}$}, and write $x|m_{\nu}$, if there exists $x'\in\R$ such that $m_{\nu}$ appears with non-zero coefficient in the expression of $x\cdot x'$ as a linear combination of the elements in $\mc{B}$. Example~\ref{ex:1111} shows for instance that for $n=4$ we have
\[z_{0,1,0,1} \mbox{ and }z_{1,1,0,1} \mbox{ divide }z_{0,0,0,1} \cdot z_{0,0,1,1} \cdot z_{0,1,1,1}.\]

Since $\S_1 = \R_1$, the set $\mc{B}_1$ is a basis of $\S_1$. It consists of
\[\mc{B}_1 = \mc{Z}(\mc{L}(1^n) \setminus \{(1^i,0^{n-i}): i=0,\cdots,n\}).\]
This induces for every $p,q$ a basis $\mc{B}^{\wedge p}_q$ of $\bw^p \S_1 \oo \R_q$ given by
\[\mc{B}^{\wedge p}_q = \{ z_1 \wedge \cdots \wedge z_p \oo m_{\nu} : \{z_1,\cdots,z_p\}\mbox{ is a subset of }\mc{B}_1\mbox{ of size }p\mbox{ and }m_{\nu}\in\mc{B}_q\}.\]
Note that $\mc{Z}(\mc{D}) \subseteq \mc{B}_1$, but $\mc{Z}(\mc{A}) \not\subseteq \mc{B}_1$, so the Koszul cycle $c$ is not necessarily an element of $\mc{B}^{\wedge p}_q$. Nevertheless, there exists scalars $\a_{z_1,\cdots,z_p}$ such that $c$ is expressed as a linear combination
\begin{equation}\label{eq:c-as-linear-combination}
 c = \sum_{z_1,\cdots,z_p \in \mc{B}_1} \a_{z_1,\cdots,z_p} \cdot z_1 \wedge \cdots \wedge z_p \oo m.
\end{equation}
This expression is obtained from (\ref{eq:koscycle}) by replacing any element $z_{\ul{v}^i}\notin\mc{B}_1$ with the corresponding linear combination of elements in $\mc{B}_1$. Since $\mc{Z}(\mc{D}) \subseteq \mc{B}_1$, none of the elements $z_{\ul{v}^i}$ with $\ul{v}^i\in\mc{D}$ are affected by this replacement, hence we have
\begin{equation}\label{eq:z1zp-contain-D}
 \mbox{if }\a_{z_1,\cdots,z_p} \neq 0\mbox{ then }\{z_1,\cdots,z_p\} \supseteq \mc{Z}(\mc{D}).
\end{equation}
We won't be concerned with the precise value of the scalars $\a_{z_1,\cdots,z_p}$, but we note that all of the terms in \eqref{eq:c-as-linear-combination} have the monomial $m$ as the factor in $\R_q$. It will also be important to note that using Lemma~\ref{lem:basis-R1} we get that $\mc{Z}(\mc{A})$ is a linearly independent set. As a consequence $\mc{Z}(\mc{V})$ is linearly independent and thus $c \neq 0$.

If $c$ was a boundary, then $c = \partial(b)$ for some $b\in\bw^{p+1} \S_1 \oo \R_{q-1}$, where $\partial$ denotes the Koszul differential. We express $b$ as a linear combination of elements in $\mc{B}^{\wedge(p+1)}_{q-1}$ and observe that this linear combination involves, with non-zero coefficient, terms of the form
\[z_1 \wedge \cdots \wedge z_{p+1} \oo m_{\nu} \in \mc{B}^{\wedge(p+1)}_{q-1}\]
where $z_1,\cdots,z_p$ are such that the coefficient $\a_{z_1,\cdots,z_p}$ in \eqref{eq:c-as-linear-combination} is non-zero, and where the expression of $z_{p+1} \cdot m_{\nu}$ as a linear combination of elements in $\mc{B}_q$ involves $m$ appearing with non-zero coefficient, i.e. $z_{p+1}$ divides $m$. By Lemma~\ref{lem:divisors-of-m} below, if $z_{p+1}$ divides $m$ then $z_{p+1}\in\mc{Z}(\mc{D})$, which combined with \eqref{eq:z1zp-contain-D} implies that $z_{p+1} \in \{z_1,\cdots,z_p\}$, contradicting the fact that $z_1 \wedge \cdots \wedge z_{p+1} \oo m_{\nu}$ was an element of $\mc{B}^{\wedge(p+1)}_{q-1}$.
\end{proof}

To conclude the proof of Theorem~\ref{thm:nonvanishing} we prove the following.

\begin{lemma}\label{lem:divisors-of-m}
 If $z_{\ul{u}}\in\mc{B}_1$ divides $m$ then $\ul{u}\in\mc{D}$.
\end{lemma}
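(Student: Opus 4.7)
My plan is to constrain $\ul{u}$ by combining two ingredients: the rank grading on $\R$, and an explicit element of $\R_1$ that annihilates $z_{\ul{u}}$ while acting non-trivially on $m$.

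The first step is a rank argument. Assuming $z_{\ul{u}}$ divides $m$, I can write $m = z_{\ul{u}} \cdot y$ for some homogeneous $y \in \R_{q-1}$, and then $\rk(y) = \rk(m) - |\ul{u}| = \binom{q+1}{2} - |\ul{u}|$. Applying Lemma~\ref{lem:small-rank} to $y$ forces $\rk(y) \geq \binom{q}{2}$, which gives $|\ul{u}| \leq q$. This already rules out $\ul{u} = (1,0^{n-q-1},1^q)$, which has size $q+1$, and reduces the lemma to proving that $u_i=0$ for $2 \leq i \leq n-q$.

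For this remaining position constraint, I will argue by contradiction: suppose $u_{i_0}=1$ for some $i_0 \in \{2,\ldots,n-q\}$, and set $\ul{v} = \ul{1}-e_{i_0} \in \mc{P}(1^n)$. Since $\ul{u}$ and $\ul{v}$ disagree at $i_0$ they are incomparable, and $\max(\ul{u},\ul{v})=\ul{1}$; the straightening rule \eqref{eq:SL} then gives $z_{\ul{u}} \cdot z_{\ul{v}} = z_{\min(\ul{u},\ul{v})} \cdot z_{\ul{1}} = 0$ in $\R$, because $z_{\ul{1}}=x_n$ vanishes. Substituting $m = z_{\ul{u}} \cdot y$ into this identity yields $z_{\ul{v}} \cdot m = 0$, so the contradiction will follow once I verify that $z_{\ul{v}} \cdot m \neq 0$.

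Establishing this non-vanishing is the main obstacle; I plan to do it by running the straightening algorithm on the product $z_{\gamma(1)} \cdots z_{\gamma(q)} \cdot z_{\ul{v}}$. Since each $\gamma(t)$ has support contained in $\{n-q+1,\ldots,n\}$, which is disjoint from $\{i_0\}$, the factors form a chain $\gamma(1) \leq \cdots \leq \gamma(q) \leq \ul{v}$. A direct check of the $f,l$-inequalities used in the proof of Theorem~\ref{thm:basis-Rbar} shows that when $i_0 < n-q$ this chain is already a standard basis element $m_{\gamma'}$, and in particular non-zero. The borderline sub-case $i_0 = n-q$ is more delicate: the $f,l$-inequality fails at the top step ($f(\ul{v},\ul{1}) = n-q$ versus $l(\gamma(q),\ul{v}) = n-q-1$), so I will apply \eqref{eq:re-express-zv} to $z_{\ul{v}}$, rewriting it as $-\sum_{j \neq i_0} z_{\ul{1}-e_j}$. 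Terms with $j \in \{n-q+1,\ldots,n\}$ collapse via \eqref{eq:SL} to multiples of $z_{\ul{1}}$ and therefore vanish, while terms with $j \in \{1,\ldots,n-q-1\}$ fall back into the first sub-case and produce distinct standard basis elements $m_{\gamma_j}$ (distinguished by $\gamma_j(n-1) = \ul{1}-e_j$). Their linear independence forces $z_{\ul{v}} \cdot m = -\sum_{j=1}^{n-q-1} m_{\gamma_j} \neq 0$, yielding the desired contradiction and completing the proof.
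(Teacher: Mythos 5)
The central problem is your reading of \emph{divides}. The paper's definition says $z_{\ul{u}}$ divides $m$ if there is a $y$ such that $m$ appears with \emph{nonzero coefficient} in the expansion of $z_{\ul{u}}\cdot y$ in the standard basis $\mc{B}$; it does \emph{not} say that $m = z_{\ul{u}}\cdot y$ holds as an identity in $\ol{R}$. With the correct reading you only get
\[
z_{\ul{u}}\cdot y \;=\; \a\, m \;+\; \sum_{\mu} c_\mu\, m_\mu \qquad (\a\neq 0),
\]
where the $m_\mu\in\mc{B}_q$ are other standard basis elements of the same bidegree (degree $q$, rank $\binom{q+1}{2}$). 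Multiplying by $z_{\ul{v}}$ and using $z_{\ul{u}}z_{\ul{v}}=0$ then gives
\[
\a\, z_{\ul{v}}\, m \;=\; -\sum_\mu c_\mu\, z_{\ul{v}}\, m_\mu ,
\]
which does \emph{not} force $z_{\ul{v}} m = 0$, and consequently showing $z_{\ul{v}} m\neq 0$ yields no contradiction: the right-hand side may simply be a nonzero combination canceling it. To rescue your scheme you would need to produce a standard basis element appearing in $z_{\ul{v}}\, m$ that cannot occur in any $z_{\ul{v}}\, m_\mu$ with $m_\mu\neq m$ of the same rank --- a genuinely new claim that is neither obvious nor addressed.

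What you have right: the rank bound $|\ul{u}|\leq q$ is correct (rank is a grading, so $y$ may be taken rank-homogeneous and Lemma~\ref{lem:small-rank} applies), and this does kill the exceptional point $(1,0^{n-q-1},1^q)$; the $f,l$-inequality calculations in the two sub-cases $i_0<n-q$ and $i_0=n-q$ also check out. But those calculations sit on top of the invalid ``$z_{\ul{v}}\cdot m = 0$'' step, so they do not carry the burden they are asked to.

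For comparison, the paper never introduces the auxiliary factor $z_{\ul{v}}$. It takes $y = m_\nu = z_{\ul{u}^1}\cdots z_{\ul{u}^{q-1}}$ to be a single standard basis element, uses the rank identity $|\ul{u}|+\sum_j|\ul{u}^j| = \binom{q+1}{2}$ together with Lemma~\ref{lem:small-rank} to pin down $|\ul{u}| = i_0+1$ and $|\ul{u}^j|=j+1$ for $j>i_0$, shows $\ul{u}$ must interleave into the chain $\ul{u}^{i_0}\leq\ul{u}\leq\ul{u}^{i_0+1}$ for the product to be nonzero, and then runs a modified straightening (their Step~$2'$) that tracks what happens to the top factor of the resulting chain. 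The conclusion $\ul{u}\in\mc{D}$ falls out of case analysis on whether that top factor changes. That argument is considerably more delicate than what you wrote, and the extra care is precisely what handles the ``other terms'' that your proposal silently discards.
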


\begin{proof}
 Consider elements $z_{\ul{u}}\in\mc{B}_1$ and $m_{\nu} \in \mc{B}_{q-1}$ and assume that we have an expression of $z_{\ul{u}}\cdot m_{\nu} $ in terms of the basis $\mc{B}_q$:
 \[z_{\ul{u}}\cdot m_{\nu} = \a\cdot m + \cdots \mbox{ for some non-zero scalar }\a.\]
Using the grading of $\R$ by rank, we obtain
\[ \rk(z_{\ul{u}}) + \rk(m_{\nu}) = \rk(m) = {q+1 \choose 2}.\]
We write $m_{\nu} = z_{\ul{u}^1} \cdots z_{\ul{u}^{q-1}}$ with $1\leq |\ul{u}^1| < \cdots < |\ul{u}^{q-1}|\leq n-1$, and rewrite the above equality as
\begin{equation}\label{eq:rank-monomial}
 |\ul{u}| +  |\ul{u}^1| + \cdots + |\ul{u}^{q-1}| = 1 + 2 + \cdots + q.
\end{equation}
Let $i_0$ denote the maximal index for which $|\ul{u}^{i_0}| = i_0$ (if it does not exist, set $i_0=0$), so that $|\ul{u}^j| \geq j+1$ for $j=i_0+1,\cdots,q-1$. If $|\ul{u}| \leq i_0$ then it follows from Lemma~\ref{lem:small-rank} that
\[ z_{\ul{u}} \cdot z_{\ul{u}^1} \cdots z_{\ul{u}^{i_0}} = 0,\mbox{ thus }z_{\ul{u}}\cdot m_{\nu}=0,\]
contradicting the assumption that $\a\neq 0$. If $|\ul{u}| \geq i_0+2$ then
\[|\ul{u}| +  |\ul{u}^1| + \cdots + |\ul{u}^{q-1}| \geq (i_0+2) + 1 + \cdots + i_0 + (i_0+2) + \cdots + q > 1+2+\cdots+q,\]
contradicting \eqref{eq:rank-monomial}. It follows that $|\ul{u}| = i_0+1$ and $|\ul{u}^j| = j+1$ for $j=i_0+1,\cdots,q-1$.

If $\ul{u}^{i_0} \not\leq\ul{u}$ then we can use \eqref{eq:SL} to write $z_{\ul{u}^{i_0}} \cdot z_{\ul{u}} = z_{\ul{u}'} \cdot z_{\ul{u}''}$ with $|\ul{u}'| < i_0$, which implies by Lemma~\ref{lem:small-rank} that
\[ z_{\ul{u}^1} \cdots z_{\ul{u}^{i_0-1}} \cdot z_{\ul{u}'} = 0,\mbox{ and therefore }z_{\ul{u}^1} \cdots z_{\ul{u}^{i_0}} \cdot z_{\ul{u}} = z_{\ul{u}^1} \cdots z_{\ul{u}^{i_0-1}} \cdot z_{\ul{u}'}\cdot z_{\ul{u}''}=0.\]
This yields $z_{\ul{u}}\cdot m_{\nu}=0$, contradicting the assumption that $\a\neq 0$. A similar contradiction is obtained if $\ul{u} \not\leq \ul{u}^{i_0+1}$. These arguments show more generally that for every collection $\ul{v}^1,\cdots,\ul{v}^q \in \mc{L}(1^n)$ with $|\ul{v}^i| = i$ for all $i=1,\cdots,q$ we have
\[z_{\ul{v}^1} \cdots z_{\ul{v}^q} = 0 \mbox{ unless }\ul{v}^1 < \cdots < \ul{v}^q.\]
For such monomials, we can thus modify the Straightening Algorithm to consist of a series of operations involving only the following modification of {\bf Step $2$}:

{\bf Step $2'$:} If $m' = z_{\ul{v}^1} \cdots z_{\ul{v}^q}$ with $\ul{v}^1 < \cdots < \ul{v}^q$, $|\ul{v}^i| = i$ and $m' \not\in \mc{B}_q$ then consider the maximal index $i$ for which $f(\ul{v}^{i},\ul{v}^{i+1}) \geq l(\ul{v}^{i-1},\ul{v}^{i})$. Replace $m'$ by
\begin{equation}\label{eq:new-straightening}
- \sum_{\substack{\ul{v}^i \neq \ul{v} \in \mc{P}(1^n) \\ |\ul{v}| = |\ul{v}^i| \\ \ul{v}^{i-1} < \ul{v} < \ul{v}^{i+1}}} m'(\ul{v}^i \rightsquigarrow \ul{v})
\end{equation}
and apply {\bf Step $2'$} to each of the monomials $m'(\ul{v}^i \rightsquigarrow \ul{v})$: note that this and every subsequent application of {\bf Step $2'$} will preserve the factors $z_{\ul{v}^{i+1}} \cdots z_{\ul{v}^q}$.

We apply the modified version of the straightening algorithm to the monomial
\[m' = z_{\ul{v}^1} \cdots z_{\ul{v}^q},\mbox{ where }\ul{v}^j = \begin{cases}
 \ul{u}^j & \mbox{if }j\leq i_0; \\
 \ul{u} & \mbox{if }j = i_0+1; \\
 \ul{u}^{j-1} & \mbox{if }j\geq i_0+2.
\end{cases}
\]
If $\ul{v}^q$ remains unchanged in the process, then $\ul{v}^q = (0^{n-q},1^q)$ (since $m' = \a\cdot m + \cdots$) and in particular $\ul{u} \leq (0^{n-q},1^q)$, which yields $\ul{u}\in\mc{D}$ as desired.

Otherwise $\ul{v}^q$ must be changed at the first iteration of {\bf Step $2'$}: we have $i=q$ in \eqref{eq:new-straightening} and $\ul{v} = (0^{n-q},1^q)$ must appear among the terms in \eqref{eq:new-straightening}. If $i_0 \leq q-2$ then we obtain
\[ \ul{u} = \ul{v}^{i_0+1} \leq \ul{v}^{q-1} < \ul{v}\]
and conclude as before that $\ul{u}\in\mc{D}$.

We are left with the case $i_0 = q-1$, when $\ul{u} = \ul{v}^q$ and $\ul{u}^{q-1} = \ul{v}^{q-1}$. Let $l = l(\ul{u}^{q-1},\ul{u}) \leq f(\ul{u},(1^n))$, and note that since $|\ul{u}^{q-1}| = q-1$ and $|\ul{u}| = q$, $l$ is the unique index where $\ul{u}$ differs from $\ul{u}^{q-1}$, and in particular $u_l = 1$. Moreover, every vector $\ul{u}\neq\ul{v}\in\mc{L}(1^n)$ with $\ul{v} > \ul{u}^{q-1}$ and $|\ul{v}| = q$ is obtained from $\ul{u}$ by replacing $u_l$ with $0$ and changing one of the zero entries in $\ul{u}$ to $1$. Since $\ul{v} = (0^{n-q},1^q)$ is one of these vectors, it follows that $l\leq n-q$, and moreover $l$ is the unique index in $\{1,\cdots,n-q\}$ for which $u_l = 1$. If $l>1$ then $f(\ul{u},1^n) = 1 < l$ which is a contradiction. It follows that $l=1$, $u_2 = \cdots = u_{n-q} = 0$ and thus $\ul{u}\in\mc{D}$.
\end{proof}

\section{Examples}\label{sec:examples}

To illustrate the (non-)vanishing behavior in (\ref{eq:Kpq-allP1}) we record below the graded Betti tables of Segre embeddings of products of $\bb{P}^1$ that we were able to compute using Macaulay2:

\begin{myverbbox}[\small]{\Segtwo}
       0 1
total: 1 1
    0: 1 .
    1: . 1
\end{myverbbox}
\begin{myverbbox}[\small]{\Segthree}
       0 1  2 3 4
total: 1 9 16 9 1
    0: 1 .  . . .
    1: . 9 16 9 .
    2: . .  . . 1
\end{myverbbox}
\begin{myverbbox}[\small]{\Segfour}
       0  1   2   3    4    5    6    7   8   9 10 11
total: 1 55 320 891 1436 1375 1375 1436 891 320 55  1
       1  .   .   .    .    .    .    .   .   .  .  .
       . 55 320 891 1408 1183  192   28   .   .  .  .
       .  .   .   .   28  192 1183 1408 891 320 55  .
       .  .   .   .    .    .    .    .   .   .  .  1
\end{myverbbox}

\begin{center}
\begin{tabular}{c@{\hskip 2in}c}
 \Segtwo & \Segthree \\[3ex]
 Betti table of $\Seg(1,1)$ & Betti table of $\Seg(1,1,1)$
\end{tabular}
\end{center}

\bigskip

\begin{center}
\Segfour

\bigskip

Betti table of $\Seg(1,1,1,1)$
\end{center}

Besides the examples above and the one in the introduction, the only Betti table for a Segre product of $n\geq 3$ factors that we were able to determine computationally is for $\Seg(2,1,1)$:
\begin{myverbbox}[\small]{\Segtwooneone}
       0  1  2   3  4  5  6 7
total: 1 24 84 126 94 46 21 4
    0: 1  .  .   .  .  .  . .
    1: . 24 84 126 84 10  . .
    2: .  .  .   . 10 36 21 4
\end{myverbbox}

\begin{center}
\Segtwooneone
\end{center}
where you can check directly that statements (\ref{eq:Kp1-not-0}) and (\ref{eq:Kpq=0}) in our Main Theorem are sharp.

The first case when our results are not sharp arises for the Segre embedding of $\bb{P}^2\times\bb{P}^2\times\bb{P}^2$ into $\bb{P}^{26}$, where we have additional vanishing as explained below.

\begin{lemma}\label{lem:K101=0}
$K_{10,1}(2,2,2)=K_{11,1}(2,2,2) = 0$.
\end{lemma}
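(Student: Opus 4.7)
The plan is to reduce the desired vanishing to a dual statement via the Gorenstein property of $R(2,2,2)$, and then apply the Kempf--Weyman technique combined with Borel--Weil--Bott to rule out the remaining candidate contributions.

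\textbf{Step 1 (Gorenstein duality).} By Remark~\ref{rem:a-divides-q}, the coordinate ring $R=R(2,2,2)$ is Gorenstein with projective dimension $c=20$ and Castelnuovo--Mumford regularity $r=4$. Its Artinian reduction $\ol{R}$ therefore has a symmetric Betti table, giving a duality
\[
K_{p,q}(2,2,2)\cong K_{20-p,\,4-q}(2,2,2)^\vee.
\]
Under this duality $K_{10,1}\cong K_{10,3}^\vee$ and $K_{11,1}\cong K_{9,3}^\vee$, so it suffices to show
\[
K_{9,3}(2,2,2)=K_{10,3}(2,2,2)=0.
\]

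\textbf{Step 2 (Kempf--Weyman and Bott enumeration).} Apply Theorem~\ref{thm:pushfwd-resolution} via the projection onto $\bb{P}V_3$, exactly as in the proof of Theorem~\ref{thm:vanishing}. Combined with Cauchy's identity $\bw^i(V_1\oo V_2\oo\mc{R})=\bigoplus_{|\lambda|=i,\,\ell(\lambda)\leq 2}S_\lambda\mc{R}\oo S_{\lambda^\dagger}(V_1\oo V_2)$, this realizes $K_{p,3}(V_1,V_2,V_3)$ as a subquotient of
\[
\bigoplus_{i,j,\lambda}H^j\bigl(\bb{P}V_3,\,S_\lambda\mc{R}\oo\mc{Q}^{p+3-i}\bigr)\oo S_{\lambda^\dagger}(V_1\oo V_2)\oo K_{p-i+j,\,3-j}(V_1,V_2).
\]
The Betti table of $\Seg(2,2)$ is supported only at $(0,0)$, $(k,1)$ for $k\in\{1,2,3\}$, and $(4,2)$, while $H^3(\bb{P}^2,-)=0$; hence only a short list of $(i,j,\lambda)$ triples can contribute. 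For each, Borel--Weil--Bott pins down whether the relevant cohomology survives and, if so, as which $S_\beta V_3$. Carrying out the enumeration for $(p,q)\in\{(9,3),(10,3)\}$, almost every $\lambda$ is eliminated because the weight $(p+3-i,\lambda_1,\lambda_2)+\rho$ has repeated entries. What remains is an explicit finite list of $\GL(V_1)\times\GL(V_2)\times\GL(V_3)$-isotypic candidates, corresponding to a handful of pairs $\lambda=(\lambda_1,\lambda_2)$ paired with $K_{k,1}(V_1,V_2)$ for $k=1,2,3$.

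\textbf{Step 3 (Killing the surviving candidates).} Finally, verify that every surviving isotypic piece is annihilated when passing to the Kempf--Weyman subquotient. One route is to trace the differentials of the double complex built in the proof of Theorem~\ref{thm:pushfwd-resolution} on each $\GL$-isotypic component, exhibiting every candidate either as a coboundary or as mapping non-trivially to the next position. A complementary route is an equivariant bookkeeping argument: the $\Sigma_3$-symmetry permuting the three factors, together with the already-established non-vanishing of $K_{p,1}(2,2,2)$ for $1\leq p\leq 9$ from Theorem~\ref{thm:nonvanishing-Kp1} (which via the duality of Step~1 pins down the dimensions of $K_{s,3}$ for $11\leq s\leq 19$) and the Euler characteristic on each isotypic component, constrains the residual contributions at $p=9,10$ in row $q=3$ to be zero.

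\textbf{Main obstacle.} The delicate part is Step~3: the Bott computation alone does not force the candidates to vanish, so one must actually work with the Kempf--Weyman differentials or with equivariant Hilbert-series constraints. This is precisely the step where the generic bounds of Theorem~\ref{thm:vanishing} stop being sharp, and additional structure specific to the $\GL_3^3$-equivariant minimal free resolution of $R(2,2,2)$ must be invoked to complete the argument.
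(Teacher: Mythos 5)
Your Step 1 (dualizing to row $q=3$) is harmless but gains nothing, and your Step 2 is a correct setup; the trouble is that, as you yourself concede in the ``Main obstacle'' paragraph, Step 3 is left unfilled, and with your choice of projection it genuinely cannot be filled by Bott's theorem alone. Projecting onto $\bb{P}V_3$ so that $\mc{K}_{i,q}(\mc{M}) = K_{i,q}(V_1,V_2)\oo\mc{Q}^{i+q}$ (i.e.\ using only the Betti table of $\Seg(2,2)$), already for $K_{11,1}(2,2,2)$ the term $j=0$, $i=8$, $\lambda=(4,4)$ yields
\[
H^0\bigl(\bb{P}V_3,\,S_{(4,4)}\mc{R}\oo\mc{Q}^4\bigr)\oo S_{(4,4)^\dagger}(V_1\oo V_2)\oo K_{3,1}(V_1,V_2)\cong S_{(4,4,4)}(V_3)\oo S_{(2,2,2,2)}(V_1\oo V_2)\oo K_{3,1}(V_1,V_2),
\]
which is nonzero; a surviving candidate of the same kind appears at $q=3$ after your duality step. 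Theorem~\ref{thm:pushfwd-resolution} only gives a subquotient, so a surviving term does not contradict the vanishing, but it means your ``enumeration + Bott'' cannot establish it, and neither of the two routes you sketch for Step 3 is actually carried out.

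The proof in the paper closes exactly this gap by a sharper choice of partial projection. Instead of fibering over $\bb{P}V_3$ with the syzygies of $\Seg(V_1,V_2)$, one fibers over the space of lines in $V_3$ with the rank-two tautological quotient $\mc{Q}$ and $\mc{M}=\bigoplus_d\Sym^dV_1\oo\Sym^dV_2\oo\Sym^d\mc{Q}$, so that $\mc{K}_{i,q}(\mc{M})$ carries the syzygies of $\Seg(2,2,1)$ together with their $\GL_2$-weight. The decisive new input, which your argument is missing, is that one can pin down this $\GL_2$-weight exactly from the computed Betti numbers: $\dim K_{8,1}(2,2,1)=8$ forces $K_{8,1}(2,2,1)=S_{3,3,3}(\bb{C}^3)\oo S_{3,3,3}(\bb{C}^3)\oo S_{8,1}(\bb{C}^2)$, and then the Pieri rule applied to the Koszul differential together with $\dim K_{7,1}(2,2,1)=63$ forces $K_{7,1}(2,2,1)=S_{3,3,2}(\bb{C}^3)\oo S_{3,3,2}(\bb{C}^3)\oo S_{7,1}(\bb{C}^2)$. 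With this information Bott now kills every term outright: the indices $i\le 6$ and $i\ge 9$ die for degree reasons, and for $i=7,8$ the $\GL_2$-weight $(\lambda_1,1)$ has $\lambda_2=1<11-i$, so the partition $(\lambda_1,\lambda_2,11-i)$ is never dominant and $H^0$ vanishes. No analogue of your Step 3 is ever invoked. So the missing idea is twofold: choose the projection that retains the $\GL(\bb{C}^2)$-weight data from $\Seg(2,2,1)$, and pin that data down by the dimension/Pieri argument before feeding it into Bott.
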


\begin{proof}
  First, consider the resolution of $\Seg(2,2,1)$. Since $K_{8,1}(2,2,1)$ is a sum of polynomial representations of $\GL(3) \times \GL(3) \times \GL(2)$ of degree $9$, and it has dimension $8$, the only possibility is that
  \[
    K_{8,1}(2,2,1) = S_{3,3,3}(\bb{C}^3) \otimes S_{3,3,3}(\bb{C}^3) \otimes S_{8,1}(\bb{C}^2).
  \]
  There is a nonzero equivariant map
  \[
    K_{8,1}(2,2,1) \to K_{7,1}(2,2,1) \otimes \bb{C}^3 \otimes \bb{C}^3 \otimes \bb{C}^2,
  \]
  which implies, by the Pieri rule, that the $63$-dimensional space $K_{7,1}(2,2,1)$ contains a summand of the form
  \[
    S_{3,3,2}(\bb{C}^3) \otimes S_{3,3,2}(\bb{C}^3) \otimes S_\lambda(\bb{C}^2)
  \]
  where $\lambda$ is either $(8)$ or $(7,1)$. The former gives a representation of dimension $81$, which is too large, so we conclude that $\lambda=(7,1)$. Since this representation has dimension $63$, we conclude that
  \[
    K_{7,1}(2,2,1) = S_{3,3,2}(\bb{C}^3) \otimes S_{3,3,2}(\bb{C}^3) \otimes S_{7,1}(\bb{C}^2).
  \]

  Now, we try to compute $K_{11,1}(2,2,2)$ using Theorem~\ref{thm:pushfwd-resolution}. We work over the space of lines in $V_3$ with $\dim(V_3)=3$ and 
  \[
    \mc{M} = \bigoplus_{d \ge 0} \Sym^d(V_1) \otimes \Sym^d(V_2) \otimes \Sym^d(Q).
  \]
  The only terms in Theorem~\ref{thm:pushfwd-resolution} that can possibly contribute are
\begin{align*}
  H^1\left( \bw^{12}(V_1 \otimes V_2 \otimes \mc{R}) \right), \qquad
  H^0\left( \mc{K}_{i,1}(\mc{M}) \otimes \bw^{11-i}(V_1 \otimes V_2 \otimes \mc{R})\right).
\end{align*}
We focus on which representations of $\GL(V_3)$ these produce. The first term contributes groups of the form $H^1(S_\lambda(\mc{R}))$ where $\lambda_1 \le \dim(V_1 \otimes V_2) = 9$, and such that $(\lambda_1 - 1, 1, \lambda_2)$ is weakly decreasing by Bott's theorem. But no such partition exists, so there is no contribution.

For the second term, we get representations of the form $S_{(\lambda_1,\lambda_2,11-i)}(\bb{C}^3)$ where $\lambda_1+\lambda_2=i+1$. If $i \le 6$, there is no such partition. If $i=7,8$, the above argument shows that $\lambda_2=1$, so again there is no such partition. A similar argument applies to show that $K_{10,1}(2,2,2)=0$.
\end{proof}

Since $P(2,2,2;3)=12$, the conclusion (\ref{eq:Kpq=0}) of our main theorem shows that $K_{p,3}(2,2,2)=0$ if $p<9$. Since the coordinate ring of $\Seg(2,2,2)$ is Gorenstein and it has projective dimension $20$ and regularity $4$, the vanishing $K_{10,1}(2,2,2)=K_{11,1}(2,2,2) = 0$ implies that
\[K_{9,3}(2,2,2)=K_{10,3}(2,2,2) = 0,\]
so we get more vanishing than predicted in the third row of the Betti table. 

Unlike (\ref{eq:Kpq=0}) which fails to be tight for $\Seg(2,2,2)$, it follows from Lemma~\ref{lem:K101=0} that (\ref{eq:Kp1-not-0}) still gives the precise non-vanishing range in this example, namely $K_{p,1}(2,2,2) \neq 0$ for $1\leq p\leq 9$.

\section*{Acknowledgments} 
Experiments with the computer algebra software Macaulay2 \cite{M2} have provided numerous valuable insights. Raicu acknowledges the support of the Alfred P. Sloan Foundation, and of the National Science Foundation Grant No. 1600765. Sam was partially supported by National Science Foundation Grant No. 1500069.


\begin{bibdiv}
  \begin{biblist}

\bib{bruns-ASL}{article}{
   author={Bruns, Winfried},
   title={Additions to the theory of algebras with straightening law},
   conference={
      title={Commutative algebra},
      address={Berkeley, CA},
      date={1987},
   },
   book={
      series={Math. Sci. Res. Inst. Publ.},
      volume={15},
      publisher={Springer, New York},
   },
   date={1989},
   pages={111--138},
   review={\MR{1015515 (90h:13012)}},
}

\bib{hodge}{book}{
   author={De Concini, Corrado},
   author={Eisenbud, David},
   author={Procesi, Claudio},
   title={Hodge algebras},
   series={Ast\'erisque},
   volume={91},
   note={With a French summary},
   publisher={Soci\'et\'e Math\'ematique de France, Paris},
   date={1982},
   pages={87},
   review={\MR{680936}},
}

\bib{EEL}{article}{
   author={Ein, Lawrence},
   author={Erman, Daniel},
   author={Lazarsfeld, Robert},
   title={A quick proof of nonvanishing for asymptotic syzygies},
   journal={Algebr. Geom.},
   volume={3},
   date={2016},
   number={2},
   pages={211--222},
   issn={2214-2584},
   review={\MR{3477954}},
   doi={10.14231/AG-2016-010},
}

\bib{eisenbud-ASL}{article}{
   author={Eisenbud, David},
   title={Introduction to algebras with straightening laws},
   conference={
      title={Ring theory and algebra, III},
      address={Proc. Third Conf., Univ. Oklahoma, Norman, Okla.},
      date={1979},
   },
   book={
      series={Lecture Notes in Pure and Appl. Math.},
      volume={55},
      publisher={Dekker, New York},
   },
   date={1980},
   pages={243--268},
   review={\MR{584614}},
}

\bib{M2}{article}{
          author = {Grayson, Daniel R.},
          author = {Stillman, Michael E.},
          title = {Macaulay 2, a software system for research
                   in algebraic geometry},
          journal = {Available at \url{http://www.math.uiuc.edu/Macaulay2/}}
        }

\bib{rubei-P1}{article}{
   author={Rubei, Elena},
   title={On syzygies of Segre embeddings},
   journal={Proc. Amer. Math. Soc.},
   volume={130},
   date={2002},
   number={12},
   pages={3483--3493},
   issn={0002-9939},
   review={\MR{1918824}},
   doi={10.1090/S0002-9939-02-06597-8},
}

\bib{rubei-PN}{article}{
   author={Rubei, Elena},
   title={Resolutions of Segre embeddings of projective spaces of any
   dimension},
   journal={J. Pure Appl. Algebra},
   volume={208},
   date={2007},
   number={1},
   pages={29--37},
   issn={0022-4049},
   review={\MR{2269826}},
   doi={10.1016/j.jpaa.2005.11.010},
}

\bib{weyman}{book}{
   author={Weyman, Jerzy},
   title={Cohomology of vector bundles and syzygies},
   series={Cambridge Tracts in Mathematics},
   volume={149},
   publisher={Cambridge University Press},
   place={Cambridge},
   date={2003},
   pages={xiv+371},
   isbn={0-521-62197-6},
   review={\MR{1988690 (2004d:13020)}},
   doi={10.1017/CBO9780511546556},
}

  \end{biblist}
\end{bibdiv}

\end{document}